\documentclass[10pt]{article}
\usepackage{geometry}                
\geometry{a4paper}                   
\usepackage{graphicx}
\usepackage[utf8]{inputenc} 
\usepackage{textcomp} 

\usepackage{flafter}  

\usepackage{amsmath,amssymb}  
\usepackage{bm}  
\usepackage{amsthm}

\usepackage{memhfixc}  
\usepackage{pdfsync}  

\newtheorem{definition}{Definition}[section]
\newtheorem{theorem}{Theorem}[section]
\newtheorem{proposition}{Proposition}[section]
\newtheorem{corollary}{Corollary}[section]
\newtheorem{remark}{Remark}[section]
\newtheorem{lemma}{Lemma}[section]
\numberwithin{equation}{section}

\title{Insertion and Lie Bracket Concerning Finite Sets}
\author{Zhou Mai \footnote{address:Colleague of Mathematical Science, Nankai University, Weijin Road, Tianjin City, Republic China;
            email address: zhoumai@nankai.edu.cn}}

\begin{document}

\maketitle

\begin{abstract}
In this article we discuss the operations of partitions
(sequence of disjoint finite subsets) which are
quotient, insertion, composition and Lie bracket.
Moreover, we discuss applications of those operations for
Feymman diagrams and Kontesvich's graphs.

$\bf{Keywords:}$ partition, quotient, insertion,
Lie bracket, Feymman diagrams, admissible graphs.
\end{abstract}

\tableofcontents

\section{Introduction}

In this article we discuss some operations of 
partitions, where the partition means the
sequence of disjoint finite subsets.
The operations what we focus on include quotient,
insertion, composition and Lie bracket.
All of operation is generalization of ones
concerning Feymman diagrams (see A. Connes and D. Kreimer
\cite{1, 2}, A. Connes and M. Marcolli \cite{3}, 
D. Kreimer \cite{6})
and Kontesvich's graphs (see L. M. Ionescu \cite{4}, M. Kontesvich \cite{7}).
The construction in this article is suitable for the cases of
Feymman diagrams and Kontesvich's graphs,
actually, if we consider some structure maps additionally,
the operations concerning Feymman diagrams
and Kontesvich's graphs can be reduced to
our construction. Here the construction of
quotient follows the ideas in  Zhou Mai \cite{9},
but, some modification occurs such that it is
more suitable for the cases of Feymman diagrams
and Kontesvich's graphs. Our construction 
is suitable for ordinary graphs in the 
sense of graphic theory as well.
Somehow, a ordinary graph can be regarded
as a Feymman diagram without external lines,
but, the case of subgraph is different.
It seems that three types of above graphs can be dealt
with in an uniform way.

This paper is organized as follows. In section 2
we discuss the quotient and insertion of partitions
in details. Based on the quotient we construct
the coproduct which will result in a hopf algebra,
but we do not discuss this issue more.
In section 3 we construct the composition
and Lie bracket. Here two types of composition
are considered, both of them will result in well defined
Lie bracket. Finally, in section 4 we discuss
the cases of Feymman diagrams, Kontesvich's
graphs and ordinary graphs starting from
our construction.

\section{Quotient and insertion of the partitions}

\subsection{Notations concerning the partitions}

\paragraph{Partitions:}

Firstly, we introduce some notations which will
be useful for discussion below.
\begin{itemize}

\item 
For a finite set $A$, we denote the power
set of $A$ by $\mathcal{P}(A)$.
The reversion map
$\mathcal{R}:\mathcal{P}(\mathcal{P}(A))
\rightarrow \mathcal{P}(A)$ is defined
to be: 
$$
\mathcal{R}(\{I_{i}\})=
\bigcup_{i}I_{i},\,\{I_{i}\}\in 
\mathcal{P}(\mathcal{P}(A)).
$$
Let $\mathcal{P}_{dis}^{2}(A)$
denote a subset of $\mathcal{P}(\mathcal{P}(A))$,
$$
\mathcal{P}_{dis}^{2}(A)=
\{\{I_{i}\}\in 
\mathcal{P}(\mathcal{P}(A))|
\{I_{i}\}\in\mathbf{Part}(\mathcal{R}(\{I_{i}\}))\}.
$$
We call the element $\{I_{i}\}\in\mathcal{P}_{dis}^{2}(A)$
the partition in $A$. If we ignore the order of $\{I_{i}\}$,
then we identify $\{I_{i}\}$ with $\{I_{\sigma(i)}\}$,
where $\sigma\in \mathbb{S}_{m}$ and $m=|\{I_{i}\}|$
($|B|$ denotes the number of the elements in a finite
set $B$).

A partition $\{I_i{}\}_{i=1}^{m}\in\mathcal{P}_{dis}^{2}(A)$
can be decribed by a function $f$ 
from $I=\mathcal{R}(\{I_{i}\})$
to $\mathbb{N}$ with $|Im(f)|=m$.
Precisely, let $Im(f)=\{i_{1},\cdots,i_{m}\}$
($0<i_{1}<\cdots<i_{m}$), then 
$\{f^{-1}(i_{k})\}_{k=1}^{m}$ is a partition
in $A$. If $f^{-1}(i_{k})=I_{k}$, then the function $f$
defines the partition $\{I_{i}\}_{i=1}^{m}$.
For a permutation
$\sigma:\{i_{1},\cdots,i_{m}\}\rightarrow
\{i_{\sigma(1)},\cdots,i_{\sigma(m)}\}$,
it is obvious that $\sigma\circ f$ defines
a same partition. On the other hand, let
$\tau$ be a map from $\{i_{1},\cdots,i_{m}\}$
to $\underline{m}=\{1,\cdots,m\}$,
$\tau(i_{k})=k$ ($k=1,\cdots,m$),
then $\tau\circ f$ defines the same partition
also. Without loss of generality, we can always
assume $Im(f)=\underline{m}$. We call the function
$f:I\rightarrow\underline{m}$ satisfying
$f^{-1}(i)=I_{i}$ ($i=1,\cdots,m$) the defining
function of the partition $\{I_{i}\}_{i=1}^{m}$
denoted by $f_{\{I_{i}\}}$. 

\item 
Let $\{I_{i}\}_{i=1}^{m},\{J_{j}\}_{j=1}^{n}\in\mathcal{P}_{dis}^{2}(A)$
with defining functions $f_{\{I_{i}\}},\,f_{\{J_{j}\}}$,
$\mathcal{R}(\{I_{i}\})\cap\mathcal{R}(\{J_{j}\})\\=\emptyset$,
Then $\{I_{1},\cdots,I_{m},J_{1},\cdots,J_{n}\}\in\mathcal{P}_{dis}^{2}(A)$,
the new partition as above is denoted by $\{I_{i}\}\cup\{J_{j}\}$.
Let $f_{\{I_{i}\}\cup\{J_{j}\}}$ be the defining
function of $\{I_{i}\}\cup\{J_{j}\}$,
then, $f_{\{I_{i}\}\cup\{J_{j}\}}|_{I}=f_{\{I_{i}\}}$,
$f_{\{I_{i}\}\cup\{J_{j}\}}|_{J}=\iota\circ f_{\{J_{j}\}}$,
where $I=\mathcal{R}(\{I_{i}\})$,
$J=\mathcal{R}(\{J_{j}\})$ and
$\iota(j)=j+m$ ($j=1,\cdots,n$).
We denote $f_{\{I_{i}\}\cup\{J_{j}\}}$
by $f_{\{I_{i}\}}\cup f_{\{J_{j}\}}$ also.
It is obvious that $f_{\{J_{j}\}}\cup f_{\{I_{i}\}}$
defines same partition, therefore, we will
identify $f_{\{J_{j}\}}\cup f_{\{I_{i}\}}$
with $f_{\{I_{i}\}}\cup f_{\{J_{j}\}}$.

\item 
Let $\{I_{i}\},\{J_{j}\}\in\mathcal{P}_{dis}^{2}(A)$,
we say $\{J_{j}\}\subset\{I_{i}\}$, if for any $j$,
there is an $i$, such that $J_{j}\subset I_{i}$.
An important situation is $\{J_{j}\}=\{I_{i}\cap B\}$,
where $B\subset\mathcal{R}(\{I_{i}\})$.
It is obvious that $\mathcal{R}(\{J_{j}\})=B$.
In this situation we call $\{J_{j}\}$ 
the restriction of $\{I_{i}\}$ on $B$,
denoted by $\{J_{j}\}=\{I_{i}\}_{(B)}$.
The defining function of $\{J_{j}\}$ can be
determined as follows. Let $\{i_{1},\cdots,i_{n}\}
=\{i|1\leq i\leq m, I_{i}\cap B\not=\emptyset\}$,
where $i_{1}<\cdots<i_{n}$ and $n=|\{J_{j}\}|$.
If we take $J_{j}=I_{i_{j}}\cap B$, then, 
$f_{\{J_{j}\}}$ should be such a function,
$f_{\{J_{j}\}}:B\rightarrow\underline{n}$,

$$
(f_{\{J_{j}\}})^{-1}(j)=
(f_{\{I_{i}\}})^{-1}(i_{j})\cap B=J_{j}.
$$
Thus, $f_{\{I_{i}\}}|_{B}=\iota\circ f_{\{I_{i}\}}$,
where $\iota:\underline{n}\rightarrow
\{i_{1},\cdots,i_{n}\}$, $\iota(j)=i_{j}$
($j=1,\cdots,n$). If we do not distinguish
$f_{\{J_{j}\}}$ and $\iota\circ f_{\{I_{i}\}}$, 
then we have $f_{\{I_{i}\}}|_{B}=f_{\{I_{i}\}}$.
In fact the functions to define a partition is
an equivalent class under a natural equivalent
relation, above formula is exactly valid
in the sense of the equivalent relation.

If $D\subset B\subset \mathcal{R}(\{I_{i}\})$,
$\{J_{j}\}=\{I_{i}\}_{(B)}$, we have
$\{J_{j}\}_{(D)}=\{I_{i}\}_{(D)}$. 

\end{itemize}

\paragraph{Map-union of the partitions}

Let $\{I_{i}\}\in\mathcal{P}_{dis}^{2}(A)$,
$B\subset A$ be a subset, 
$B\cap\mathcal{R}(\{I_{i}\})=\emptyset$,
$f:B\to\mathcal{R}(\{I_{i}\})$ ba a map, we define a new 
partition $\{f^{-1}(I_{i})\cup I_{i}\}$ which is called
the map-union of $\{I_{i}\}$ and $B$ by $f$,
denoted by $B\sqcup_{f}\{I_{i}\}$.

Now we discuss some properties of the map-union.

\begin{itemize}
\item
Let $\{I_{i}\}_{i=1}^{m}\in\mathcal{P}_{dis}^{2}(A)$
with defining function $f_{\{I_{i}\}}$,
$B\subset A$ be a subset, 
$B\cap I=\emptyset$ ($I=\mathcal{R}(\{I_{i}\})$),
for two maps $f:B\to I$, $g:B\to I$, then

$$
B\sqcup_{f}\{I_{i}\}=B\sqcup_{g}\{I_{i}\}\,\Leftrightarrow
f^{-1}(I_{i})=g^{-1}(I_{i})\,(1\leq i\leq m)\Leftrightarrow
f_{\{I_{i}\}}\circ f=f_{\{I_{i}\}}\circ g.
$$
Thus, the map-union $B\sqcup_{f}\{I_{i}\}$ 
depends only on $f_{\{I_{i}\}}\circ f$.
With the help of defining function $f_{\{I_{i}\}}$,
there is a one-one correspondence
between $I_{i}$ and its index $i$, the
map $f_{\{I_{i}\}}\circ f$ can be regarded
as a map $\iota:B\rightarrow\{I_{i}\}$,
and the map-union $B\sqcup_{f}\{I_{i}\}$
can be expressed as $\{\iota^{-1}(I_{i})\cup I_{i}\}$
denoted by $B\sqcup_{\iota}\{I_{i}\}$.

\item For two partitions
$\{I_{i}\},\{J_{j}\}\in\mathcal{P}_{dis}^{2}(A)$,
and two subsets $B,C\subset A$, which satisfy
$I\cap J=\emptyset$, $B\cap C=\emptyset$,
$(B\cup C)\cap(I\cup J)=\emptyset$, where
$I=\mathcal{R}(\{I_{i}\})$,
$J=\mathcal{R}(\{J_{j}\})$, it is obvious that

$$
(B\cup C)\sqcup_{f\cup g}(\{I_{i}\}\cup\{J_{j}\})=
(B\sqcup_{f}\{I_{i}\})\cup(C\sqcup_{g}\{J_{j}\}),
$$
where $f:B\to I$, $g:C\to J$,
$f\cup g:B\cup C\to I\cup J$,
$(f\cup g)|_{B}=f$, $(f\cup g)|_{C}=g$.

\item Let $\{I_{i}\}\in\mathcal{P}_{dis}^{2}(A)$,
$B,C\subset A$, which satisfy
$B\cap I=\emptyset$, $C\subset I$, where 
$I=\mathcal{R}(\{I_{i}\})$, then

$$
B\sqcup_{f}\{I_{i}\}=(B_{1}\sqcup_{f|_{B_{1}}}\{I_{i}\}_{(I\cap C)})
\cup(B_{2}\sqcup_{f|_{B_{2}}}\{I_{i}\}_{(I\setminus C)}),
$$
where $f:B\to I$, 
$B_{1}=f^{-1}(I\cap C)$,
$B_{2}=f^{-1}(I\setminus C)$.

\item Let $B,C\subset A$,
$\{I_{i}\}\in\mathcal{P}_{dis}^{2}(A)$,
$B\cap C=\emptyset$, $I\cap(B\cup C)=\emptyset$,
$I=\mathcal{R}(\{I_{i}\})$, then we have
$$
(B\cup C)\sqcup_{f\cup g}\{I_{i}\}
=B\sqcup_{f}(C\sqcup_{g}\{I_{i}\})
=C\sqcup_{g}(B\sqcup_{f}\{I_{i}\}),
$$
where
$f:B\to I$, $g:C\to I$,
$f\cup g:B\cup C\to I$,
$(f\cup g)|_{B}=f$, $(f\cup g)|_{C}=g$.

\end{itemize}

\subsection{Quotient of the partitions}

Now we define the quotient between two partitions.

\begin{definition}
Let $\{I_{i}\}\in\mathcal{P}_{dis}^{2}(A)$,
$B\subset\mathcal{R}(\{I_{i}\})$, we define the quotient
of $\{I_{i}\}$ by $\{I_{i}\}_{(B)}$, 
denoted by $\{I_{i}\}\diagup\{I_{i}\}_{(B)}$,
to be
\begin{equation}
\{I_{i}\}\diagup\{I_{i}\}_{(B)}=\{I_{i}\}_{I_{i}\cap B=\emptyset}
\cup\{\mathcal{R}(\{I_{i}\}_{I_{i}\cap B\not=\emptyset})
\setminus B\}.
\end{equation}
We call $\{\mathcal{R}(\{I_{i}\}_{I_{i}\cap B\not=\emptyset})
\setminus B\}$ the ideal part of the quotient 
$\{I_{i}\}\diagup\{I_{i}\}_{(B)}$.
\end{definition}

\begin{remark}
\begin{itemize}
\item It is convenience to denote 
$\mathcal{R}(\{I_{i}\}_{I_{i}\cap B\not=\emptyset})$
by $\mathcal{R}_{\{I_{i}\},B}$, or, 
$\mathcal{R}_{B}$ for short sometime.
From definition 2.1 we can easily see that

$$
\mathcal{R}(\{I_{i}\})=\mathcal{R}(\{I_{i}\}\diagup\{I_{i}\}_{(B)})
\cup B, \,\mathcal{R}(\{I_{i}\}\diagup\{I_{i}\}_{(B)})
\cap B=\emptyset.
$$
Therefore

$$
\mathcal{R}(\{I_{i}\}\diagup\{I_{i}\}_{(B)})=
\mathcal{R}(\{I_{i}\})\setminus B.
$$
\item Particularly, when $\mathcal{R}_{B}=B$, 
which means that if $I_{i}\cap B\not=\emptyset$
we have $I_{i}\subset B$, or, 
$\{I_{i}\}_{(B)}=\{I_{i}\}_{I_{i}\cap B\not=\emptyset}$
which is a subset of $\{I_{i}\}$, then
$\{I_{i}\}\diagup\{I_{i}\}_{(B)}
=(\{I_{i}\}\setminus\{I_{i}\}_{(B)})\cup\{\emptyset\}$.
We will identify $(\{I_{i}\}\setminus\{I_{i}\}_{(B)})\cup\{\emptyset\}$
with $\{I_{i}\}\setminus\{I_{i}\}_{(B)}$.
In this special situation the ideal part of quotient
$\{I_{i}\}\diagup\{I_{i}\}_{(B)}$ is $\{\emptyset\}$,
we call this special situation the travail quotient.
Furthermore, $\{I_{i}\}\diagup\{I_{i}\}_{(I)}
=\{I_{i}\}\setminus\{I_{i}\}=\emptyset$.

\item Noting that $\{I_{i}\}=\{I_{i}\}_{I_{i}\cap B=\emptyset}\cup
\{I_{i}\}_{I_{i}\cap B\not=\emptyset}$,
and $\{I_{i}\}_{I_{i}\cap B\not=\emptyset}
\diagup \{I_{i}\}_{(B)}=\{\emptyset\}\cup
\{\mathcal{R}_{B}\setminus B\}$,
thus we can rewrite the formula
(2.1) in the following form:

$$
\{I_{i}\}\diagup\{I_{i}\}_{(B)}=
\{I_{i}\}_{I_{i}\cap B=\emptyset}\cup
(\{I_{i}\}_{I_{i}\cap B\not=\emptyset}
\diagup \{I_{i}\}_{(B)}).
$$
\item Let $J\subset A$, $J\cap I=\emptyset$,
$B\subset I$,
$I=\mathcal{R}(\{I_{i}\})$, then the
quotient $\{I_{i}\}\diagup\{I_{i}\}_{(B)}$
induces the quotient of map-union $J\sqcup_{f}\{I_{i}\}$, where
$f:J\to \{I_{i}\}$ is a map. Actually, let $p_{B}$
be the projection from $\mathcal{R}(\{I_{i}\})$ to
$\{I_{i}\}\diagup\{I_{i}\}_{(B)}$,
$p_{B}(I_{i})=I_{i}$ for $I_{i}\cap B=\emptyset$,
$p_{B}(I_{i})=\mathcal{R}_{B}\setminus B$
for $I_{i}\cap B\not=\emptyset$,
it is easy to check that

$$
(J\sqcup_{f}\{I_{i}\})\diagup\{I_{i}\}_{(B)}=
J\sqcup_{p_{B}\circ f}(\{I_{i}\}\diagup\{I_{i}\}_{(B)}).
$$
\end{itemize}
\end{remark}

We will take a look at the properties
of the quotient. Here we focus on the case of

$$
(\{I_{i}\}\diagup\{I_{i}\}_{(B)})\diagup
(\{I_{i}\}\diagup\{I_{i}\}_{(B)})_{(C)},
$$
where $B,C\subset I=\mathcal{R}(\{I_{i}\})$,
$B\cap C=\emptyset$. The key point is that
when $\mathcal{R}_{B}\cap\mathcal{R}_{C}=\emptyset$,
we have the following decomposition 

$$
\{I_{i}\}_{I_{i}\cap(B\cup C)\not=\emptyset}=
\{I_{i}\}_{I_{i}\cap B\not=\emptyset}\cup
\{I_{i}\}_{I_{I}\cap C\not=\emptyset}.
$$
Otherwise, above decomposition
is not valid.

\begin{lemma}
Let $\{I_{i}\}\in\mathcal{P}_{dis}^{2}(A)$,
$B,C\subset \mathcal{R}(\{I_{i}\})$,
$B\cap C=\emptyset$,
then following formulas are valid.

\begin{itemize}
\item

\begin{equation}
(\{I_{i}\}\diagup\{I_{i}\}_{(B)})_{(C)}=
\{I_{i}\}_{(B\cup C)}\diagup\{I_{i}\}_{(B)}.
\end{equation}

\item

\begin{equation}
\begin{array}{c}
(\{I_{i}\}\diagup\{I_{i}\}_{(B)})\diagup
(\{I_{i}\}\diagup\{I_{i}\}_{(B)})_{(C)} \\
=(\{I_{i}\}\diagup\{I_{i}\}_{(B)})\diagup
(\{I_{i}\}_{(B\cup C)}\diagup\{I_{i}\}_{(B)}).
\end{array}
\end{equation}

\item If $\mathcal{R}_{B}\cap
\mathcal{R}_{C}=\emptyset$, then

\begin{equation}
\{I_{i}\}_{(B\cup C)}\diagup\{I_{i}\}_{(B)}
=\{I_{i}\}_{(C)},
\end{equation}
and

$$
(\{I_{i}\}\diagup\{I_{i}\}_{(B)})_{(C)}=\{I_{i}\}_{(C)}.
$$
\end{itemize}
\end{lemma}

\begin{proof}
At first, in order to prove the formula (2.2), 
we calculate the left side of the formula.
From definition 2.1 we have
$$
\begin{array}{c}
(\{I_{i}\}\diagup\{I_{i}\}_{(B)})_{(C)} \\
=(\{I_{i}\}_{I_{i}\cap B=\emptyset}\cup
\{\mathcal{R}_{B}\setminus B\})_{(C)} \\
=\{I_{i}\cap C\}_{I_{i}\cap B=\emptyset}\cup
\{\mathcal{R}_{B}\cap C\}.
\end{array}
$$
On the other hand, for the right side of the formula (2.2)
we have
$$
\begin{array}{c}
\{I_{i}\}_{(B\cup C)}\diagup\{I_{i}\}_{(B)} \\
=\{I_{i}\cap(B\cup C)\}_{I_{i}\cap B=\emptyset}\cup
\{[\mathcal{R}_{B}\cap(B\cup C)]\setminus B\} \\
=\{I_{i}\cap C\}_{I_{i}\cap B=\emptyset}\cup
\{\mathcal{R}_{B}\cap C\}.
\end{array}
$$
Therefore, the formula (2.2) is valid.
The formula (2.3) is the corollary of
the formula (2.2).

We turn to prove the formula (2.4).
Because $\mathcal{R}_{B}\cap
\mathcal{R}_{C}=\emptyset$, we know that

$$
I_{i}\cap B\not=\emptyset\Longleftrightarrow
I_{i}\cap C=\emptyset\,
(or\,I_{i}\cap C\not=\emptyset\Longleftrightarrow
I_{i}\cap B=\emptyset),\,
if\, I_{i}\cap(B\cup C)\not=\emptyset.
$$
Thus

$$
\{I_{i}\}_{(B\cup C)}=
\{I_{i}\}_{(B)}\cup\{I_{i}\}_{(C)},
$$
then, in this situation the quotient is travial,
that is

$$
\{I_{i}\}_{(B\cup C)}\diagup(\{I_{i}\}_{(B\cup C)})_{(B)}
=\{I_{i}\}_{(B\cup C)}\diagup\{I_{i}\}_{(B)}
=\{I_{i}\}_{(B\cup C)}\setminus\{I_{i}\}_{(B)}
=\{I_{i}\}_{(C)}.
$$
Combining the formulas (2.2), (2.4) we can get
tha last formula in lemma 2.1.

\end{proof}

\begin{proposition}
Let $\{I_{i}\}\in\mathcal{P}_{dis}^{2}(A)$,
$B,C\subset \mathcal{R}(\{I_{i}\})$,
$B\cap C=\emptyset$,
then following formulas are valid.
\begin{itemize}
\item When 
$\mathcal{R}_{B}\cap \mathcal{R}_{C}\not=\emptyset$,

\begin{equation}
(\{I_{i}\}\diagup\{I_{i}\}_{(B)})\diagup(\{I_{i}\}_{(B\cup C)}\diagup\{I_{i}\}_{(B)})
=\{I_{i}\}\diagup\{I_{i}\}_{(B\cup C)}.
\end{equation}

\item When 
$\mathcal{R}_{B}\cap \mathcal{R}_{C}=\emptyset$, 
we have

\begin{equation}
(\{I_{i}\}\diagup\{I_{i}\}_{(B)})\diagup(\{I_{i}\}_{(B\cup C)}\diagup\{I_{i}\}_{(B)})
=(\{I_{i}\}\diagup\{I_{i}\}_{(B)})\diagup\{I_{i}\}_{(C)}.
\end{equation}
\end{itemize}
\end{proposition}

\begin{proof}
From definition of quotient, we know that

$$
\{I_{i}\}\diagup\{I_{i}\}_{(B)} 
=\{I_{i}\}_{I_{i}\cap B=\emptyset}\cup
\{\mathcal{R}_{B}\setminus B\},
$$
and

$$
\{I_{i}\}\diagup\{I_{i}\}_{(B\cup C)}=
\{I_{i}\}_{I_{i}\cap(B\cup C)=\emptyset}\cup
\{\mathcal{R}_{B\cup C}\setminus(B\cup C)\}.
$$

The discussions will be divided into 
two situations.

$\mathbf{Case\,\,of\,\,\mathcal{R}_{B}
\cap \mathcal{R}_{C}\not=\emptyset:}$

By the definition of the quotient
we have

$$
\begin{array}{c}
(\{I_{i}\}\diagup\{I_{i}\}_{(B)})\diagup(\{I_{i}\}\diagup\{I_{i}\}_{(B)})_{(C)} \\
=\{I_{i}\}_{I_{i}\cap(B\cup C)=\emptyset}\cup
\{(\mathcal{R}(\{I_{i}\}_{I_{i}\cap B=\emptyset,\,
I_{i}\cap C\not=\emptyset})\cup
(\mathcal{R}_{B}\setminus B))\setminus C\}. \\
\end{array}
$$
Noting $\mathcal{R}(\{I_{i}\}_{I_{i}\cap B=\emptyset,\,
I_{i}\cap C\not=\emptyset}\cap B=\emptyset$,
thus

$$
\begin{array}{c}
\mathcal{R}(\{I_{i}\}_{I_{i}\cap B=\emptyset,\,
	I_{i}\cap C\not=\emptyset})\cup
(\mathcal{R}_{B}\setminus B) \\
=(\mathcal{R}(\{I_{i}\}_{I_{i}\cap B=\emptyset,\,
I_{i}\cap C\not=\emptyset})\cup
\mathcal{R}_{B})\setminus B 
=\mathcal{R}_{B\cup C}\setminus B.
\end{array}
$$
In summary, we get

$$
(\{I_{i}\}\diagup\{I_{i}\}_{(B)})\diagup
(\{I_{i}\}\diagup\{I_{i}\}_{(B)})_{(C)}=
\{I_{i}\}_{I_{i}\cap(B\cup C)=\emptyset}\cup
(\mathcal{R}_{B\cup C}\setminus(B\cup C)).
$$
Above formula is (2.5) exactly.

$\mathbf{Case\,\,of\,\,\mathcal{R}_{B}
\cap \mathcal{R}_{C}=\emptyset:}$

In this situation,
we have $(\mathcal{R}_{B}
\setminus B)\cap C=\emptyset$, and
$\mathcal{R}(\{I_{i}\}_{I_{i}\cap B=\emptyset,\,
I_{i}\cap C\not=\emptyset}=\mathcal{R}(\{I_{i}\}_
{I_{i}\cap C\not=\emptyset})$, thus

$$
\begin{array}{c}
(\{I_{i}\}\diagup\{I_{i}\}_{(B)})\diagup(\{I_{i}\}\diagup\{I_{i}\}_{(B)})_{(C)} \\
=\{I_{i}\}_{I_{i}\cap(B\cup C)=\emptyset}\cup
\{\mathcal{R}_{B}\setminus B\}\cup
\{(\mathcal{R}(\{I_{i}\}_{I_{i}\cap B=\emptyset,\,
I_{i}\cap C\not=\emptyset})\setminus C\} \\
=\{I_{i}\}_{I_{i}\cap(B\cup C)=\emptyset}\cup\{\mathcal{R}_{B}
\setminus B\}\cup\{\mathcal{R}_{C}\setminus C\}.
\end{array}
$$
Now we reach the formula (2.6).

\end{proof}

\begin{corollary}
	
\begin{equation}
(\{I_{i}\}\diagup\{I_{i}\}_{(B)})\diagup(\{I_{i}\}\diagup\{I_{i}\}_{(B)})_{(C)}
=(\{I_{i}\}\diagup\{I_{i}\}_{(C)})\diagup(\{I_{i}\}\diagup\{I_{i}\}_{(C)})_{(B)}.
\end{equation}

\begin{equation}
(\{I_{i}\}\diagup\{I_{i}\}_{(B)})\diagup\{I_{i}\}_{(C)}
=(\{I_{i}\}\diagup\{I_{i}\}_{(C)})\diagup\{I_{i}\}_{(B)}, 
\end{equation}

\end{corollary}

Now we consider more general situation.

\begin{definition}
	Let $\{I_{i}\},\{J_{j}\}\in\mathcal{P}_{dis}^{2}(A)$, 
	$\mathcal{R}(\{J_{j}\})\subset\mathcal{R}(\{I_{i}\})$,
	we define the following quotient inductively,
	$$
	\begin{array}{c}
	\{I_{i}\}\diagup (J_{1})\doteq\{I_{i}\}\diagup\{I_{i}\}_{(J_{1})},
	\{I_{i}\}\diagup (J_{1},J_{2})\doteq(\{I_{i}\}\diagup (J_{1}))
	\diagup(\{I_{i}\}\diagup (J_{1}))_{(J_{2})}, \\
	\cdots,\{I_{i}\}\diagup (J_{1},\cdots,J_{k+1})\doteq
	(\{I_{i}\}\diagup (J_{1},\cdots,J_{k}))\diagup
	(\{I_{i}\}\diagup (J_{1},\cdots,J_{k}))_{(J_{k+1})},\cdots.
	\end{array}
	$$
\end{definition}

\begin{remark}
	By the formula (2,7) in corollary 2.1,
	it is easy to check that the quotient
	in definition 2.2 does not depend on 
	the order of $J_{j}$. Precisely,
	let $n=|\{J_{j}\}|$, $\tau\in\mathbb{S}_{n}$
	be a permutation on $\underline{n}$, then,
	
	$$
	\{I_{i}\}\diagup(J_{j})=
	\{I_{i}\}\diagup(J_{\tau(j)}).
	$$
\end{remark}

\begin{definition}
	Let $\{I_{i}\},\{J_{j}\}\in\mathcal{P}_{dis}^{2}(A)$, if
	$\mathcal{R}(\{J_{j}\})\subset\mathcal{R}(\{I_{i}\})$,
	and
	\begin{equation}
	\mathcal{R}_{\{I_{i}\},J_{j}}\cap\mathcal{R}_{\{I_{i}\},J_{j^{\prime}}}
	=\emptyset,\,j\not=j^{\prime},
	\end{equation}
	we call $\{J_{j}\}$ admits to $\{I_{i}\}$, denoted by
	$\{J_{j}\}\sqsubset\{I_{i}\}$.
\end{definition}

\begin{remark}
	The condition (2.9) is equivalent to the following conditions
	$$
	\mathcal{R}_{\{I_{i}\},J_{j}}\cap 
	J_{j^{\prime}}=\emptyset\,
	(or,\,
	\mathcal{R}_{\{I_{i}\},J_{j^{\prime}}}\cap 
	J_{j}=\emptyset),\,j\not=j^{\prime}.
	$$
	$\mathcal{R}_{\{I_{i}\},J_{j}}$ will be shortly
	denoted by $\mathcal{R}_{J_{j}}$ later.
\end{remark}

Actually, the fact 
$\{J_{j}\}_{j=1}^{p}\sqsubset\{I_{i}\}$
means that $\{I_{i}\}$ adapts to a 
decomposition as follows,

$$
\{I_{i}\}=\{I_{i}\}_{I_{i}\cap J=\emptyset}
\cup(\bigcup_{j}\{I_{i}\}_{I_{i}\cap J_{j}
	\not=\emptyset}),
$$
where $J=\mathcal{R}(\{J_{j}\})$.
Therefore, we have

$$
\begin{array}{c}
\{I_{i}\}\diagup(J_{j}) \\
=\{I_{i}\}_{I_{i}\cap J=\emptyset}
\cup(\bigcup_{j}\{I_{i}\}_{I_{i}\cap J_{j}
	\not=\emptyset}\diagup\{I_{i}\}_{(J_{j})}) \\
=\{I_{i}\}_{I_{i}\cap J=\emptyset}\cup
(\bigcup\limits_{j}\{\mathcal{R}_{J_{j}}\setminus J_{j}\}).
\end{array}
$$

Generally, $\{J_{j}\}$ may do not admit to $\{I_{i}\}$,
even though $\mathcal{R}(\{J_{j}\})\subset\mathcal{R}(\{I_{i}\})$.
But if we consider the quotient, the situation
can always be indeuced to the simple case.

\begin{proposition}
	Let $\{J_{j}\},\{I_{i}\}\in\mathcal{P}_{dis}^{2}(A)$,
	$\mathcal{R}(\{J_{j}\})\subset\mathcal{R}(\{I_{i}\})$.
	Then, there is a partition 
	$\{L_{l}\}\in\mathcal{P}_{dis}^{2}(A)$ satisfying
	
	\begin{itemize}
		\item $\{L_{l}\}\sqsubset\{I_{i}\}$.
		\item $\{J_{j}\}\subset\{L_{l}\}$, and
		$\mathcal{R}(\{J_{j}\})=\mathcal{R}(\{L_{l}\})$.
		\item If $\{K_{k}\}\in\mathcal{P}_{dis}^{2}(A)$
		satisfies $\{J_{j}\}\subset\{K_{k}\}\sqsubset\{I_{i}\}$,
		then we have $\{L_{l}\}\subset\{K_{k}\}$.
	\end{itemize}
	
	If we ignore the order in $\{L_{l}\}$, 
	$\{L_{l}\}$ is unique.
\end{proposition}

\begin{proof}
	In $\{J_{j}\}$ we define an equivalent
	relation as follows. Let $n=|\{J_{j}\}|$.
	For any $j,j^{\prime}\in\underline{n}$,
	we say $\mathcal{R}_{j}\sim \mathcal{R}_{j^{\prime}}$, 
	if there is a subset $\{j_{0},j_{1},\cdots,j_{m}\}$
	of $\underline{n}$, such that $j_{0}=j$,
	$j_{m}=j^{\prime}$, and
	$\mathcal{R}_{J_{j_{k}}}\cap\mathcal{R}_{J_{k+1}}
	\not=\emptyset$ ($k=0,\cdots,m-1$).
	It is obvious that $\sim$ is an equivalent
	relation.
	
	Under the equivalent relation defined above
	$\{J_{j}\}$ can be divided into the set
	of equivalent class, i.e. we have
	
	$$
	\{J_{j}\}=\bigcup\{J_{j}\}_{j\in E_{l}},
	$$
	where $\{E_{l}\}\in\mathbf{Part}(\underline{n})$,
	each $\{J_{j}\}_{j\in E_{l}}$ is an equivalent
	class under $\sim$.
	We take $L_{l}$ to be $L_{l}=\bigcup_{j\in E_{l}}J_{j}$.
	It is easy to check that 
	$\mathcal{R}(\{L_{l}\})=\mathcal{R}(\{J_{j}\})$
	and $\mathcal{R}_{L_{l}}\cap\mathcal{R}_{L_{l^{\prime}}}
	=\emptyset$ for $l\not= l^{\prime}$.
	From proposition 2.1 we know that
	
	$$
	\{I_{i}\}\diagup(J_{j})_{j\in E_{l}}=
	\{I_{i}\}\diagup\{I_{i}\}_{(\bigcup_{j\in E_{l}}J_{j})}
	=\{I_{i}\}\diagup\{I_{i}\}_{(L_{l})}.
	$$
	By the previous discussion, we can reach
	the formula
	$$
	\{I_{i}\}\diagup(J_{j})=\{I_{i}\}\diagup(L_{l}).
	$$
	
	Let $\{K_{k}\}\in\mathcal{P}_{dis}^{2}(A)$
	satisfy $\{J_{j}\}\subset\{K_{k}\}\sqsubset\{I_{i}\}$,
	then we can prove that for each $l$, there is
	$k$ such that $\mathcal{R}_{\{I_{i}\},L_{l}}\subset\mathcal{R}_{\{I_{i}\},K_{k}}$.
	On the other hand, $\{J_{j}\}\subset\{K_{k}\}$,
	thus we have $\{L_{l}\}\subset\{K_{k}\}$.
	
	Let $\{L^{\prime}_{l^{\prime}}\}\in
	\mathcal{P}_{dis}^{2}(A)$ satisfy the
	conditions same as ones of $\{L_{l}\}$, then
	both of $\{L_{l}\}\subset\{L^{\prime}_{l^{\prime}}\}$ 
	and $\{L^{\prime}_{l^{\prime}}\}\subset\{L_{l}\}$
	are valid,which implies $\{L_{l}\}=\{L^{\prime}_{l^{\prime}}\}$.
	
\end{proof}

We denote $\{L_{l}\}$ by
$\{L_{l}\}=\{J_{j}\}_{ad,\{I_{i}\}}$. From proposition 2.2,
when we discuss the quotient $\{I_{i}\}\diagup(J_{j})$,
we can always assume $\{J_{j}\}\sqsubset\{I_{i}\}$.

\begin{corollary}
Let $\{I_{i}\},\{J_{j}\},\{K_{k}\}
\in\mathcal{P}_{dis}^{2}(A)$ satisfying
$J\cap K=\emptyset$, $J,K\subset I$,
where $I=\mathcal{R}(\{I_{i}\})$,
$J=\mathcal{R}(\{J_{j}\})$,
$K=\mathcal{R}(\{K_{k}\})$. Then we have

$$
\begin{array}{c}
(\{J_{j}\}\cup\{K_{k}\})_{ad,\{I_{i}\}} \\
=(\{J_{j}\}_{ad,\{I_{i}\}}\cup\{K_{k}\}
_{ad,\{I_{i}\}})_{ad,\{I_{i}\}} \\
=(\{J_{j}\}\cup\{K_{k}\}_
{ad,\{I_{i}\}\diagup\{J_{j}\}})_{ad,\{I_{i}\}} \\
=(\{K_{k}\}\cup\{J_{j}\}_{ad,\{I_{i}\}
\diagup\{K_{k}\}})_{ad,\{I_{i}\}}.
\end{array}
$$
\end{corollary}

\subsection{Insertion of the partitions}

Now we turn to the discussion of insertion.
Let $\{I_{i}\},\{J_{j}\}\in \mathcal{P}_{dis}^{2}(A)$, 
$\mathcal{R}(\{I_{i}\})\cap\mathcal{R}(\{J_{j}\})=\emptyset$. 
We hope to define the insertion of $\{J_{j}\}$ into
$\{I_{i}\}$ at $I_{a}$.

\begin{definition}
Let $\{I_{i}\}_{1\leq i\leq m},\{J_{j}\}_{1\leq j\leq n}
\in \mathcal{P}_{dis}^{2}(A)$, 
$\mathcal{R}(\{I_{i}\})\cap\mathcal{R}(\{J_{j}\})=\emptyset$,
and $\iota:I_{a}\rightarrow \{J_{j}\}_{1\leq j\leq n}$ be a map
($1\leq a\leq m$). The insertion of $\{J_{j}\}_{j=1}^{n}$ into
$\{I_{i}\}_{i=1}^{m}$ at $I_{a}$ by $\iota$ is a partition
$\{I_{i}\}\circ_{a}^{\iota}\{J_{j}\}\in \mathcal{P}_{dis}^{2}(A)$,
where
\begin{equation}
\begin{array}{c}
\{I_{i}\}\circ_{I_{a}}^{\iota}\{J_{j}\} \\
=\{I_{1},\cdots,I_{a-1},J_{1}\cup\iota^{-1}(J_{1}),\cdots,
J_{n}\cup\iota^{-1}(J_{n}),I_{a+1},\cdots,I_{m}\}.
\end{array}
\end{equation}
$I_{a}$ is called the position of insertion, and $\iota$
is called insertion map.
\end{definition}

\begin{remark}
$\\$
\begin{itemize}
\item We can explain the insertion in terms of 
map-uinon. 
If we ignore the order of the partitions, the insertion
$\{I_{i}\}\circ_{a}^{\iota}\{J_{j}\}$ can be expressed
as $\{I_{i}\}_{i\not=a}\cup(I_{a}\sqcup_{\iota}\{J_{j}\})$.
In fact, the map-union is a special situation of insertion,
$B\sqcup_{\iota}\{J_{j}\}=\{B\}\circ_{B}^{\iota}\{J_{j}\}$
($B\cap\mathcal{R}(\{J_{j}\})=\emptyset$).
For simplicity, we can denote 
$\{I_{i}\}\circ_{a}^{\iota}\{J_{j}\}$
in the following intuitive way:
$$
\begin{array}{c}
\{\cdots,\hat{I_{a}},\cdots\}  \\
\uparrow \\
I_{a}\sqcup_{\iota}\{J_{j}\}.
\end{array}
$$

\item Particularly, we can always identify $\{I_{i}\}$ with
$\{I_{i}\}\cup\{\emptyset\}$, then the insertion of $\{J_{j}\}$ into
$\{I_{i}\}$ at $\emptyset$ is defined as
$\{I_{i}\}\cup\{J_{j}\}$ denoted by
$\{I_{i}\}\circ_{\emptyset}\{J_{j}\}$.
We call $\{I_{i}\}\circ_{\emptyset}\{J_{j}\}$ the
travail insertion. Let $\{K_{k}\}\in\mathcal{P}_{dis}^{2}(A)$
satisfying $(I\cup J)\cap\mathcal{R}(\{K_{k}\})=\emptyset$,
then, it is obvious that 
$(\{I_{i}\}\cup\{J_{j}\})\circ_{J_{b}}^{\iota}\{K_{k}\}
=\{I_{i}\}\cup(\{J_{j}\}\circ_{J_{b}}^{\iota}\{K_{k}\})$, or
$(\{I_{i}\}\circ_{\emptyset}\{J_{j}\})\circ_{J_{b}}^{\iota}\{K_{k}\}
=\{I_{i}\}\circ_{\emptyset}(\{J_{j}\}\circ_{J_{b}}^{\iota}\{K_{k}\})$.
\end{itemize}
\end{remark}

In the case of non-travail insertion we have:

\begin{proposition}
Let $\{I_{i}\},\{J_{j}\},\{K_{k}\}\in \mathcal{P}_{dis}^{2}(A)$
satisfying $I\cap J=\emptyset$,
$I\cup J=K$ ($I=\mathcal{R}(\{I_{i}\}),\,
J=\mathcal{R}(\{J_{j}\}),\,K=\mathcal{R}(\{K_{k}\})$),
$\{J_{j}\}=\{K_{k}\}_{(J)}$, 
$\mathcal{R}_{\{K_{k}\},J}
\setminus J\not=\emptyset$
.
Then
$$
\{K_{k}\}\diagup\{J_{j}\}=\{I_{i}\},
$$
if and only if, 
there is an insertion map $\iota:I_{a}\to \{J_{j}\}$
for some $I_{a}$ ($1\leq a\leq \#\{I_{i}\}$)
such that
$$
\{K_{k}\}=\{I_{i}\}\circ_{I_{a}}^{\iota}\{J_{j}\},\,\,(or\,\,
\{K_{k}\}=(\{K_{k}\}\diagup\{J_{j}\})\circ_{I_{a}}^{\iota}\{J_{j}\}).
$$
\end{proposition}

\begin{proof}
Let $\{K_{k}\}=\{I_{i}\}\circ_{I_{a}}^{\iota}\{J_{j}\}$.
By proposition 2.2 and remark 2.2 we have

$$
\{K_{k}\}=\{I_{i}\}_{i\not=a}\cup(I_{a}\sqcup_{\iota}\{J_{j}\}).
$$
Thus $\{K_{k}\}_{K_{k}\cap 
J=\emptyset}=\{I_{i}\}_{i\not=a}$,
$\{K_{k}\}_{K_{k}\cap J\not=\emptyset}
=I_{a}\sqcup_{\iota}\{J_{j}\}$
and $\{K_{k}\}_{(J)}=\{J_{j}\}$. Then we have

$$
\{K_{k}\}_{K_{k}\cap J\not=\emptyset}\diagup\{J_{j}\}
=(I_{a}\sqcup_{\iota}\{J_{j}\})\diagup\{J_{j}\}
=\{I_{a}\}.
$$
Finally, we get

$$
\{K_{k}\}\diagup\{J_{j}\}=
\{K_{k}\}_{K_{k}\cap J=\emptyset}\cup
(\{K_{k}\}_{K_{k}\cap J\not=\emptyset}\diagup\{J_{j}\})
=\{I_{i}\}_{i\not=a}\cup\{I_{a}\}=\{I_{i}\},
$$
i.e.

$$
(\{I_{i}\}\circ_{I_{a}}^{\iota}\{J_{j}\})\diagup\{J_{j}\}=\{I_{i}\}.
$$

Conversely, we assume $\{K_{k}\}$
satisfies $\{K_{k}\}\diagup\{J_{j}\}=\{I_{i}\}$.
Let $K=\mathcal{R}(\{K_{k}\})$,
$\mathcal{R}_{J}=
\mathcal{R}(\{K_{k}\}_{K_{k}\cap J\not=\emptyset})$,
then we have $K=I\cup J$, 
$\{J_{j}\}=\{K_{k}\}_{(J)}=
\{K_{k}\cap J\}_{K_{k}\cap J\not=\emptyset}$, and
$$
\{K_{k}\}_{K_{}\cap J=\emptyset}
\cup\{\mathcal{R}_{J}\setminus J\}=\{I_{i}\}.
$$
In order to recover $\{K_{k}\}$ by insertion,
the position of the insertion should be
taken to be $I_{a}=\mathcal{R}_{J}\setminus J$
for some $a$ ($1\leq a\leq \#\{I_{i}\}$). Thus
$\{K_{k}\}_{K_{}\cap J=\emptyset}=\{I_{i}\}_{i\not=a}$.
Let $\{K_{k}\}_{K_{k}\cap J\not=\emptyset}
=\{K_{k_{j}}\}$, and $K_{k_{j}}=J_{j}\cup L_{j}$,
where $J_{j}\cap L_{j}=\emptyset$,
we get a decomposition of $I_{a}$ that is 
$I_{a}=\bigcup_{j}L_{j}$.
Above decomposition define a map
$\iota:I_{a}\to\{J_{j}\}$, such that
$\iota^{-1}(J_{j})=L_{j}$. It is obvious
that $\{K_{k_{j}}\}=I_{a}\sqcup_{\iota}\{J_{j}\}$.
Up to now, we have proved

$$
\{K_{k}\}=\{I_{i}\}\circ_{I_{a}}^{\iota}\{J_{j}\}.
$$

\end{proof}

\begin{remark}
	In the first situation of proposition 2.3, the position of insertion is 
	the ideal part of quotient, and the insertion map is taken in
	a canonical way, thus we denote this insertion by symbol
	$\mathbf{\circ_{ideal_{B}}}$.
	Let $\{I_{i}\}\in \mathcal{P}_{dis}^{2}(A)$,
	$B\subset \mathcal{R}(\{I_{i}\})$.
	
	\begin{itemize}
		\item When $\mathcal{R}(\{I_{i}\}_{I_{i}\cap B\not=\emptyset})
		\setminus B\not=\emptyset$, we have
		$$
		(\{I_{i}\}\diagup\{I_{i}\}_{(B)})\circ_{ideal_{B}}\{I_{i}\}_{(B)}
		=\{I_{i}\},
		$$
		where the position of insertion is at 
		$\mathcal{R}(\{I_{i}\}_{I_{i}\cap B\not=\emptyset}
		\setminus B$, and the insertion map is taken to be
		$\iota^{-1}(I_{i}\cap B)=I_{i}\setminus B$ if
		$I_{i}\setminus B\not=\emptyset$.
		
		\item 
		When $\mathcal{R}(\{I_{i}\}_{I_{i}\cap B\not=\emptyset})=B$,
		we have
		$$
		\{I_{i}\}=(\{I_{i}\}\diagup\{I_{i}\}_{(B)})
		\circ_{\emptyset}\{I_{i}\}_{(B)},
		$$
		where the insertion is travail one.
	\end{itemize} 
\end{remark}

There is a conclusion about insertion and map-union
as follows.

\begin{proposition}
Let $\{I_{i}\},\{J_{j}\}\in\mathcal{P}_{dis}^{2}(A)$,
$B\subset A$, $I\cap J=\emptyset$, 
$B\cap(I\cup J)=\emptyset$, 
$I=\mathcal{R}(\{I_{i}\})$, $J=\mathcal{R}(\{J_{j}\})$.
For a pair $(f,\iota)$
there is an unique pair $(f^{\prime},\iota^{\prime})$
such that
$$
(B\sqcup_{f}\{I_{i}\})\circ_{f^{-1}(I_{a})\cup I_{a}}^{\iota}\{J_{j}\}=
B\sqcup_{f^{\prime}}(\{I_{i}\}\circ_{I_{a}}^{\iota^{\prime}}\{J_{j}\}),
$$
and vice-versa,
where $1\leq a\leq |\{I_{i}\}|$, $f:B\to\{I_{i}\}$, 
$f^{\prime}:B\to\{I_{i}\}\circ_{a}^{\iota^{\prime}}\{J_{j}\}$,
$\iota:f^{-1}(I_{a})\cup I_{a}\to\{J_{j}\}$, 
$\iota^{\prime}:I_{a}\to\{J_{j}\}$.
On the other hand, we also have

$$
(B\sqcup_{f^{\prime}}(\{I_{i}\}
\circ_{I_{a}}^{\iota^{\prime}}\{J_{j}\}))\diagup\{J_{j}\}
=B\sqcup_{f^{\prime}\circ p}
((\{I_{i}\}
\circ_{I_{a}}^{\iota^{\prime}}\{J_{j}\})\diagup\{J_{j}\})
=B\sqcup_{f^{\prime}\circ p}\{I_{i}\},
$$
where $p:\{I_{i}\}
\circ_{I_{a}}^{\iota^{\prime}}\{J_{j}\}\to\{I_{i}\}$ is
a projection satisfying

$$
p(I_{i})=I_{i}\,(i\not=a)\,\,
p((\iota^{\prime})^{-1}(J_{j})\cup J_{j})=I_{a}\,
(\forall j),\,\,f^{\prime}\circ p=f.
$$ 
\end{proposition}

\begin{proof}
Let $(f,\iota)$ be a given pair, we want to construct
the pair $(f^{\prime},\iota^{\prime})$ based on $(f,\iota)$.
By the definition of insertion and map-union we have

$$
(B\sqcup_{f}\{I_{i}\})
\circ_{f^{-1}(I_{a})\cup I_{a}}^{\iota}\{J_{j}\}=
\{f^{-1}(I_{i})\cup I_{i}\}_{i\not=a}\cup
((f^{-1}(I_{a})\cup I_{a})\sqcup_{\iota}\{J_{j}\}),
$$
and

$$
B\sqcup_{f^{\prime}}(\{I_{i}\}
\circ_{I_{a}}^{\iota^{\prime}}\{J_{j}\})=
\{f^{\prime\,-1}(I_{i})\cup I_{i}\}_{i\not=a}\cup
(B_{1}\sqcup_{f^{\prime}|_{B_{1}}}
(I_{a}\sqcup_{\iota^{\prime}}\{J_{j}\})),
$$
where $B_{1}=f^{\prime\,-1}(I_{a}\cup J)$.
The previous formulas imply  $f^{-1}(I_{i})=f^{\prime\,-1}(I_{i})$
($i\not=a$), and  $f^{-1}(I_{a})=f^{\prime\,-1}(I_{a}\cup J)$.
Thus we have $B_{1}=f^{-1}(I_{a})$ and
$f|_{B\setminus B_{1}}=f^{\prime}|_{B\setminus B_{1}}$.
If we regard $\iota,\iota^{\prime},f^{\prime}|_{B_{1}}$
as maps from some subsets to $\{J_{j}\}$,
then the first formula in proposition 2.4 means
that the formula $\iota=f^{\prime}|_{B_{1}}\cup\iota^{\prime}$
should be valid, i.e. we have

$$
f^{\prime}|_{B_{1}}=\iota|_{B_{1}},\,
\iota^{\prime}=\iota|_{I_{a}}.
$$ 

Conversely, starting from
$(f^{\prime},\iota^{\prime})$ we can determine
$(f,\iota)$ in similar way.

The second formula in proposition 2.4 is obviously valid.

\end{proof}

\begin{remark}
Observing the proof of proposition 2.4, in the procedure
of from $(f^{\prime},\iota^{\prime})$ to $(f,\iota)$,
$f^{\prime}$ determines $f$ uniquely, where $f^{\prime}$ and $f$
depend on ``$a$'' only. If we fix $f^{\prime}$,
there is an one-one corresponding between
$\iota$ and $\iota^{\prime}$.
\end{remark}

There is another version of proposition 2.1
in terms of the insertion.

\begin{proposition}
	Let $\{I_{i}\},\{J_{j}\},\{K_{k}\}
	\in\mathcal{P}_{dis}^{2}(A)$,
	$I\cap J=\emptyset$, $K\cap(I\cup J)=\emptyset$,
	where $I=\mathcal{R}(\{I_{i}\})$,
	$J=\mathcal{R}(\{J_{j}\})$,
	$K=\mathcal{R}(\{K_{k}\})$.
	Let $\{L_{l}\}=(\{K_{k}\}\circ_{K_{a}}^{\iota}\{I_{i}\})
	\circ_{\ast}^{\tau}\{J_{j}\}$, then
	
	\begin{itemize}
		\item 
		
		$$
		\mathcal{R}_{\{L_{l}\},I}\cap
		\mathcal{R}_{\{L_{l}\},J}=\emptyset
		\,\Leftrightarrow\,
		\{L_{l}\}=(\{K_{k}\}\circ_{K_{a}}^{\iota}\{I_{i}\})
		\circ_{K_{b}}^{\tau}\{J_{j}\},\,a\not=b.
		$$

		\item 
		$$
		\mathcal{R}_{\{L_{l}\},I}\cap
		\mathcal{R}_{\{L_{l}\},J}\not=\emptyset
		\,\Leftrightarrow\,
		\{L_{l}\}=(\{K_{k}\}\circ_{K_{a}}^{\iota}\{I_{i}\})
		\circ_{\iota^{-1}(I_{c})\cup I_{c}}^{\tau}\{J_{j}\}.
		$$
		In this situation, we have
		
		$$
		\{L_{l}\}=\{K_{k}\}\circ_{K_{a}}^{\iota^{\prime}}
		(\{I_{i}\}\circ_{I_{c}}^{\kappa}\{J_{j}\}),
		$$
		and
		
		$$
		\{L_{l}\}_{(I\cup J)}=
		\{I_{i}\}\circ_{I_{c}}^{\kappa}\{J_{j}\},
		$$
		for some $\iota^{\prime}$ and $\kappa$.
	\end{itemize}
\end{proposition}

\begin{proof}
$\mathbf{Case\,\,of\,\,\mathcal{R}_{\{L_{l}\},I}\cap
\mathcal{R}_{\{L_{l}\},J}=\emptyset:}$

Let $\mathcal{R}_{\{L_{l}\},I}\cap
\mathcal{R}_{\{L_{l}\},J}=\emptyset$,
then

$$
\{L_{l}\}=\{L_{l}\}_{L_{l}\cap(I\cup J)=\emptyset}
\cup\{L_{l}\}_{L_{l}\cap I\not=\emptyset}\cup
\{L_{l}\}_{L_{l}\cap J\not=\emptyset}.
$$
Noting $\{L_{l}\}_{(J)}=\{J_{j}\}$, by
proposition 2.1 and proposition 2.3 we have

$$
\{L_{l}\}\diagup\{J_{j}\}=
\{L_{l}\}_{L_{l}\cap(I\cup J)=\emptyset}
\cup\{L_{l}\}_{L_{l}\cap I\not=\emptyset}
\cup\{\mathcal{R}_{\{L_{l}\},J}\setminus J\}
=\{K_{k}\}\circ_{K_{a}}^{\iota}\{I_{i}\}.
$$
Furthermore, we now that $\{L_{l}\}_{(I)}=\{I_{i}\}$.
Thus

$$
(\{L_{l}\}\diagup\{J_{j}\})\diagup\{I_{i}\}=
\{L_{l}\}_{L_{l}\cap(I\cup J)=\emptyset}
\cup\{\mathcal{R}_{\{L_{l}\},I}\setminus I\}
\cup\{\mathcal{R}_{\{L_{l}\},J}\setminus J\}
=\{K_{k}\}.
$$
Above formula means that

$$
\{\mathcal{R}_{\{L_{l}\},I}\setminus I\}=
\{L_{l}\}_{L_{l}\cap I\not=\emptyset}\diagup\{I_{i}\}
=K_{a}\sqcup_{\iota}\{I_{i}\},
$$
and

$$
\{\mathcal{R}_{\{L_{l}\},J}\setminus J\}=
\{L_{l}\}_{L_{l}\cap J\not=\emptyset}\diagup\{J_{j}\}
=K_{b}\sqcup_{\tau}\{J_{j}\},
$$
for some $a$ and $b$ ($a\not=b$).

Conversely, let

$$
\{L_{l}\}=(\{K_{k}\}\circ_{K_{a}}^{\iota}\{I_{i}\})
\circ_{K_{b}}^{\tau}\{J_{j}\},\,a\not=b.
$$
Then, we have

$$
\{L_{l}\}=\{K_{k}\}_{k\not=a,b}\cup
(K_{a}\sqcup_{\iota}\{I_{i}\})\cup
(K_{b}\sqcup_{\tau}\{J_{j}\}),
$$
and
$\{L_{l}\}_{L_{l}\cap I\not=\emptyset}
=K_{a}\sqcup_{\iota}\{I_{i}\}$,
$\{L_{l}\}_{(I)}=\{I_{i}\}$,
$\{L_{l}\}_{L_{l}\cap J\not=\emptyset}
=K_{b}\sqcup_{\tau}\{J_{j}\}$,
$\{L_{l}\}_{(J)}=\{J_{j}\}$.
Above facts imply $\mathcal{R}_{\{L_{l}\},I}\cap
\mathcal{R}_{\{L_{l}\},J}=\emptyset$.

$\mathbf{Case\,\,of\,\,\mathcal{R}_{\{L_{l}\},I}\cap
\mathcal{R}_{\{L_{l}\},J}\not=\emptyset:}$

The conclusion in this situation is the
corollary of the one in the case of
$\mathcal{R}_{\{L_{l}\},I}\cap
\mathcal{R}_{\{L_{l}\},J}=\emptyset$.
Let

$$
\{L_{l}\}=\{K_{k}\}_{k\not=a}\cup
((K_{a}\sqcup_{\iota}\{I_{i}\})
\circ_{\iota^{-1}(I_{c})\cup I_{c}}^{\tau}\{J_{j}\}).
$$
By proposition 2.4 we have

$$
(K_{a}\sqcup_{\iota}\{I_{i}\})
\circ_{\iota^{-1}(I_{c})\cup I_{c}}^{\tau}\{J_{j}\}=
K_{a}\sqcup_{\iota^{\prime}}
(\{I_{i}\}\circ_{I_{c}}^{\kappa}\{J_{j}\}),
$$
for some $\iota^{\prime}$ and $\kappa$.
It is natural that we have

$$
\{L_{l}\}_{(I\cup J)}=
\{I_{i}\}\circ_{I_{c}}^{\kappa}\{J_{j}\}.
$$

\end{proof}

Proposition 2.5 can be generalized to
more general situation. Let
$\{I_{i}\},\{J_{j_{1}}^{(1)}\},\cdots,\{J_{j_{n}}^{(n)}
\}\in\mathcal{P}_{dis}^{2}(A)$,
$J^{(l)}\cap J^{(l^{\prime})}=\emptyset$ ($l\not=l^{\prime}$),
$I\cap(\bigcup_{l}J^{(l)})=\emptyset$,
$I=\mathcal{R}(\{I_{i}\})$, 
$J^{(l)}=\mathcal{R}(\{J_{j_{l}}^{(l)}\})$
($1\leq l\leq n$),

$$
\begin{array}{c}
\{K_{k_{1}}^{(1)}\}=\{I_{i}\}\circ_{I_{a}}^{\iota_{1}}\{J_{j_{1}}^{(1)}\},\,
\{K_{k_{2}}^{(2)}\}=\{K_{k_{1}}^{(1)}\}\circ_{K_{a_{1}}}^{\iota_{2}}
\{J_{j_{2}}^{(2)}\},\cdots, \\
\{K_{k}\}=\{K_{k_{n-1}}^{(n-1)}\}
\circ_{K_{a_{n-1}}}^{\iota_{n}}\{J_{j_{n}}^{(n)}\}.
\end{array}
$$

\begin{corollary}
$\mathcal{R}_{\{K_{k}\},J^{(l)}}\cap
\mathcal{R}_{\{K_{k}\},J^{(l^{\prime})}}
=\emptyset$ ($l\not=l^{\prime}$)
if and only if there are $a_{1},\cdots,a_{n}$
such that

$$
\{K_{k}\}=\{I_{i}\}_{i\not=a_{1},\cdots,a_{n}}\cup
(\bigcup\limits_{l=1}^{n}I_{a_{l}}\sqcup_{\iota_{l}}
\{J_{j_{l}}^{(l)}\}).
$$
\end{corollary}

\begin{remark}
The conclusion of corollary 2.2 can be
described by a different way, which is

$$
\{J^{(l)}\}_{l=1}^{n}\sqsubset\{K_{k}\}\,
\Leftrightarrow
\{K_{k}\}=\{I_{i}\}_{i\not=a_{1},\cdots,a_{n}}\cup
(\bigcup\limits_{l=1}^{n}I_{a_{l}}\sqcup_{\iota_{l}}
\{J_{j_{l}}^{(l)}\}),
$$
for somw $a_{1},\cdots,a_{n}$.
\end{remark}

\subsection{Coproduct}

Now we consider the coproduct
as an application of quotient.
At first we introduce 
$$
\mathcal{P}_{dis,\,k}^{2}(A)\subset
\underbrace{\mathcal{P}_{dis}^{2}(A)
\times\cdots\times\mathcal{P}_{dis}^{2}(A)}_{k-times},
$$

$$
\begin{array}{c}
(\{I_{i_{1}}^{(1)}\},\cdots,\{I_{I_{k}}^{(k)}\})
\in\mathcal{P}_{dis,\,k}^{2}(A)
\Longleftrightarrow\,
\{I_{i_{\lambda}}^{(\lambda)}\}\in \mathcal{P}_{dis}^{2}(A),\,
\lambda=1,\cdots,k, \\
\mathcal{R}(\{I_{i_{\lambda}}^{(\lambda)}\})
\cap\mathcal{R}(\{I_{i_{\mu}}^{(\mu)}\})=\emptyset,\,
1\leq\lambda<\mu\leq k.
\end{array}
$$

Similar to definition 2.3 we have

\begin{definition}
Let $\{J_{j}\}\in\mathcal{P}_{dis}^{2}(A)$,
$(\{I_{i_{1}}^{(1)}\},\cdots,\{I_{i_{k}}^{(k)}\})
\in\mathcal{P}_{dis,\,k}^{2}(A)$, we say
$\{J_{j}\}\sqsubset(\{I_{i_{1}}^{(1)}\},\cdots,\{I_{I_{k}}^{(k)}\})$,
if $\mathcal{R}(\{J_{j}\})\subset\bigcup_{\lambda=1}^{k}\mathcal{R}
(\{I_{i_{\lambda}}^{(\lambda)}\})$,
and $\{J_{j}\}_{J_{j}\cap\mathcal{R}(\{I_{i_{\lambda}}^{(\lambda)}\})
\not=\emptyset}\sqsubset\{I_{i_{\lambda}}^{(\lambda)}\}$,
($\lambda=1,\cdots,k$).
In this situation, we define

$$
\begin{array}{c}
(\{I_{i_{1}}^{(1)}\},\cdots,\{I_{I_{k}}^{(k)}\})\diagup(J_{j})
=(\{I_{i_{1}}^{(1)}\}\diagup(J_{j}),\cdots,\{I_{I_{k}}^{(k)}\diagup(J_{j})\}),
\end{array}
$$
where 
$$
\{I_{i_{\lambda}}^{(\lambda)}\}\diagup(J_{j})
=\{I_{i_{\lambda}}^{(\lambda)}\}\diagup(J_{j})_{J_{j}
	\subset\mathcal{R}(I_{i_{\lambda}}^{(\lambda)}\})},\,
\lambda=1,\cdots,k.
$$
\end{definition}

Similarly, we can discuss the insertion for the case
of $(\{I_{i_{1}}^{(1)}\},\cdots,\{I_{i_{k}}^{(k)}\})$
in a obvious way.

Let $\mathbb{K}$ be a field of characteristic zero,
$V_{\mathbb{K},A}=\bigoplus_{1\leq k\leq |A|}
\mathbf{Span}_{\mathbb{K}}(\mathcal{P}_{dis,\,k}^{2}(A))$.
Based on the above discussions we
now define coproduct on 
$V_{\mathbb{K},A}$,
$\bigtriangleup:V_{\mathbb{K},A}
\to V_{\mathbb{K},A}\otimes V_{\mathbb{K},A}$.

\begin{definition}
Let $\{I_{i}\}\in\mathcal{P}_{dis}^{2}(A)$,
$(\{I_{i_{1}}^{(1)}\},\cdots,\{I_{i_{k}}^{(k)}\})
\in\mathcal{P}_{dis,\,k}^{2}(A)$, we define
\begin{itemize}
\item

\begin{equation}
\begin{array}{c}
\bigtriangleup\{I_{i}\}=\emptyset\otimes\{I_{i}\}
+\{I_{i}\}\otimes\emptyset+ \\
\sum\limits_{\{J_{1},\cdots,J_{l}\}\sqsubset\{I_{i}\}}
(\{I_{i}\}_{(J_{1})}\cdots,\{I_{i}\}_{(J_{l})})\otimes\{I_{i}\}\diagup(J_{j}).
\end{array}
\end{equation}

\item
\begin{equation}
\begin{array}{c}
\bigtriangleup(\{I_{i_{1}}^{(1)}\},\cdots,\{I_{i_{k}}^{(k)}\}) \\
=\emptyset\otimes(\{I_{i_{1}}^{(1)}\},\cdots,\{I_{i_{k}}^{(k)}\})+
(\{I_{i_{1}}^{(1)}\},\cdots,\{I_{i_{k}}^{(k)}\})\otimes\emptyset+ \\
\sum\limits_{\{J_{j}\}\sqsubset(\{I_{i_{1}}^{(1)}\},\cdots,\{I_{i_{k}}^{(k)}\})}
(\{I_{i}\}_{(J_{1})}\cdots,\{I_{i}\}_{(J_{l})})\otimes
(\{I_{i_{1}}^{(1)}\},\cdots,\{I_{i_{k}}^{(k)}\})\diagup(J_{j}),
\end{array}
\end{equation}
where $\{I_{i}\}=\bigcup_{\lambda}\{I_{i_{\lambda}}^{(\lambda)}\}$.
\end{itemize}
\end{definition}

For associativity of coproduct defined in definition 2.6
we need the following conclusion.

\begin{theorem}
Let $\{I_{i}\},\{J_{j}\},\{K_{k}\}\in\mathcal{P}_{dis}^{2}(A)$,
$\{J_{j}\}\sqsubset\{I_{i}\}$, 
$\{K_{k}\}\sqsubset\{I_{i}\}\diagup(J_{j})$.
If we take 
$$
\{M_{\mu}\}=(\{J_{j}\}\cup\{K_{k}\})_{ad,\{I_{i}\}},
$$
then

$$
\{M_{\mu}\}=\{J_{j}\}_{\mathcal{R}_{J_{j}}\cap K=\emptyset}
\cup\{N_{k}\},
$$
where $N_{k}=K_{k}\cup(\bigcup_{\mathcal{R}_
{\{I_{i}\},J_{j}}\cap K_{k}\not=\emptyset}J_{j})$,
$K=\mathcal{R}(\{K_{k}\})$, $k=1,\cdots,|\{K_{k}\}|$.
$\{M_{\mu}\}$ satisfies the following conditions:

\begin{itemize}
\item $\{K_{k}\}=\{M_{\mu}\}\diagup(J_{j})$.	
	
\item $\{M_{\mu}\}\sqsubset\{I_{i}\}$.

\item $\{J_{j}\}\sqsubset(\cdots,\{I_{i}\}_{(M_{\mu})},\cdots)$.

\item $(\cdots,(\{I_{i}\}\diagup(J_{j}))_{(K_{k})},\cdots)
=(\cdots,\{I_{i}\}_{(M_{\mu})},\cdots)\diagup(J_{j})$.

\item $(\{I_{i}\}\diagup(J_{j}))\diagup(K_{k})
=\{I_{i}\}\diagup(M_{\mu})$.
\end{itemize}
\end{theorem}

\begin{proof}
At first, we prove

$$
\{M_{\mu}\}=\{J_{j}\}_{\mathcal{R}_{J_{j}}\cap K=\emptyset}
\cup\{N_{k}\}.
$$	
Let $\{L_{\lambda}\}=\{I_{i}\}\diagup(J_{j})$, then,
from definition of quotient we have

$$
\{L_{\lambda}\}=\{I_{i}\}_{I_{i}\cap J=\emptyset}
\cup\{\mathcal{R}_{J_{j}}\setminus J_{j}\}_{j=1}^{p},
$$
where $p=|\{J_{j}\}|$, $J=\mathcal{R}(\{J_{j}\}_{j=1}^{p})$,
and $\mathcal{R}_{J_{j}}=\mathcal{R}_{\{I_{i}\},J_{j}}$.
Recalling definition 2.3,
$\{K_{k}\}\sqsubset\{L_{\lambda}\}$ means that

$$
\mathcal{R}_{\{L_{l}\},K_{k}}\cap
\mathcal{R}_{\{L_{l}\},K_{k^{\prime}}}
=\emptyset,\,k\not=k^{\prime},
$$
On the other hand, according to 
$\{K_{k}\}$ there is the decomposition
of $\{L_{\lambda}\}$ as follows,

$$
\{L_{\lambda}\}=\{L_{\lambda}\}_{L_{\lambda}\cap K=\emptyset}
\cup\{L_{\lambda}\}_{L_{\lambda}\cap K_{1}\not=\emptyset}
\cup\cdots\cup\{L_{\lambda}\}_{L_{\lambda}\cap K_{q}\not=\emptyset},
$$	
where $K=\mathcal{R}(\{K_{k}\})$,
and $q=|\{K_{k}\}|$.
It is obvious that

$$
\{L_{\lambda}\}_{L_{\lambda}\cap K=\emptyset}=
\{I_{i}\}_{I_{i}\cap(J\cup K)=\emptyset}
\cup\{\mathcal{R}_{J_{j}}\setminus J_{j}\}
_{\mathcal{R}_{J_{j}}\cap K=\emptyset},
$$
and

$$
\{L_{\lambda}\}_{L_{\lambda}\cap K_{k}\not=\emptyset}=
\{I_{i}\}_{I_{i}\cap J=\emptyset,\,I_{i}\cap K_{k}\not=\emptyset}
\cup\{\mathcal{R}_{J_{j}}\setminus J_{j}\}
_{\mathcal{R}_{J_{j}}\cap K_{k}\not=\emptyset},
$$
where $k=1,\cdots,q$.
Notong $J\cap K=\emptyset$, thus

$$
(\mathcal{R}_{J_{j}}\setminus 
J_{j})\cap K_{k}\not=\emptyset
\,\Leftrightarrow\,
\mathcal{R}_{J_{j}}
\cap K_{k}\not=\emptyset.
$$
In summary, we know that for
each $\mathcal{R}_{J_{j}}$ there two
possibilities:

\begin{itemize}
\item $\mathcal{R}_{J_{j}}
\cap K=\emptyset$,
\item or, there is an unique $k$
such that $\mathcal{R}_{J_{j}}
\cap K_{k}\not=\emptyset$.
\end{itemize}
Up to now we have proved 

$$
\{M_{\mu}\}=\{J_{j}\}_{\mathcal{R}_{J_{j}}\cap K=\emptyset}
\cup\{N_{k}\}.
$$

We now begin to prove that $\{M_{\mu}\}$ satisfies
the conditions in theorem 2.1 step by step.

$\mathbf{\{K_{k}\}=\{M_{\mu}\}\diagup(J_{j}):}$

Let $\{G_{l}\}=\{J_{j}\}_{ad,\{M_{\mu}\}}$,
then we have

$$
\{G_{l}\}=\{J_{j}\}_{\mathcal{R}_{J_{j}}\cap K=\emptyset}
\cup(\bigcup\limits_{k}\{\bigcup\limits_
{\mathcal{R}_{J_{j}}\cap K_{k}\not=\emptyset}J_{j}\}),
$$
and $\{M_{\mu}\}\diagup(J_{j})=
\{M_{\mu}\}\diagup(G_{l})$.
It is easy to check that
$\{K_{k}\}=\{M_{\mu}\}\diagup(G_{l})$.

$\mathbf{\{M_{\mu}\}\sqsubset\{I_{i}\}}:$

Noting

$$
\{I_{i}\}_{I_{i}\cap N_{k}\not=\emptyset}=
\{I_{i}\}_{I_{i}\cap J=\emptyset,I_{i}\cap K_{k}\not=\emptyset}
\cup(\bigcup_{j}\{I_{i}\}_{I_{i}\cap J_{j}\not=\emptyset,
\mathcal{R}_{J_{j}}\cap K_{k}\not=\emptyset}),
$$
thus

$$
\mathcal{R}_{\{I_{i}\},N_{k}}=
\mathcal{R}_{\{L_{\lambda}\},K_{k}}\cup
(\bigcup_{\mathcal{R}_{J_{j}}\cap K_{k}\not=\emptyset}J_{j}),
\,\,k=1,\cdots,q.
$$
Because $\{K_{k}\}\sqsubset\{L_{\lambda}\}$, which means

$$
\mathcal{R}_{\{L_{\lambda}\},K_{k}}\cap
\mathcal{R}_{\{L_{\lambda}\},K_{k^{\prime}}}=\emptyset,
\,k\not=k^{\prime},
$$
hence, $\{N_{k}\}\sqsubset\{I_{i}\}$.

$\mathbf{\{J_{j}\}\sqsubset(\cdots,\{I_{i}\}_{(M_{\mu})},\cdots)}:$

It is enough for us to prove $\{J_{j}\}_{J_{j}\subset 
N_{k}}\sqsubset\{I_{i}\}_{(N_{k})}$,
$k=0,1\cdots,q$.
By the definition of $\{N_{k}\}$ we know
that for each $J_{j}$, $J_{j}\cap K\not=\emptyset$, 
there is a $N_{k}$ such that $J_{j}\subset N_{k}$. 
Thus we have

$$
\{I_{i}\}_{(N_{k})}=
\{I_{i}\cap K\}_{I_{i}\cap 
J=\emptyset,\,I_{i}\cap K_{k}\not=\emptyset}
\cup(\bigcup_{\mathcal{R}_{J_{j}}\cap K_{k}\not=
\emptyset}\{I_{i}\}_{(\mathcal{R}_{J_{j}}\cap(J\cup K))}).
$$
Noting $\mathcal{R}(\{I_{i}\}_{(N_{k})})=N_{k}$,
($1\leq k\leq q$), thus

$$
J_{j}\subset N_{k}\Leftrightarrow\mathcal{R}_{J_{j}}\cap K_{k}\not=
\emptyset.
$$
It is obvious that we have
$\{J_{j}\}_{J_{j}\subset N_{k}}\sqsubset\{I_{i}\}_{(N_{k})}$.

$\mathbf{(\cdots,(\{I_{i}\}\diagup(J_{j}))_{(K_{k})},\cdots)
=(\cdots,\{I_{i}\}_{(M_{\mu})},\cdots)\diagup(J_{j}):}$

Noting

$$
\{I_{i}\}_{(M_{\mu})}=\left\{
\begin{array}{cc}
\{I_{i}\}_{(J_{j})}, & \mathcal{R}_{J_{j}}\cap K=\emptyset, \\
\{I_{i}\}_{(N_{k})}, & others,
\end{array}
\right.
$$ 
thus, in the situation of 
$\mathcal{R}_{J_{j}}\cap K=\emptyset$,
$\{I_{i}\}_{(J_{j})}\diagup\{I_{i}\}_{(J_{j})}=\emptyset$,
and in the part of $\{I_{i}\}_{(N_{k})}$,
by the expressions of $\{I_{i}\}_{(N_{k})}$
and $\{L_{\lambda}\}_{L_{\lambda}\cap K_{k}\not=\emptyset}$,
we have

$$
\{L_{\lambda}\}_{(K_{k})}=
\{I_{i}\}_{(N_{k})}\diagup
(J_{j})_{J_{j}\subset N_{k}}.
$$

$\mathbf{(\{I_{i}\}\diagup(J_{j}))\diagup(K_{k})
=\{I_{i}\}\diagup(M_{\mu}):}$

We note that

$$
\{I_{i}\}\diagup(M_{\mu})=\{I_{i}\}_{I_{i}\cap M=\emptyset}
\cup(\bigcup_{j,\mathcal{R}_{J_{j}}\cap K=\emptyset}
\{I_{i}\}_{I_{i}\cap J_{j}\not=\emptyset}
\diagup\{I_{i}\}_{(J_{j})})
\cup(\bigcup_{k=0}^{q}\{I_{i}\}_{I_{i}\cap M_{k}\not=\emptyset}
\diagup\{I_{i}\}_{(M_{k})}),
$$
and

$$
\{L_{\lambda}\}\diagup(K_{k})=
\{L_{\lambda}\}_{L_{\lambda}\cap K=\emptyset}
\cup(\bigcup_{k=1}^{q}
\{L_{\lambda}\}_{L_{\lambda}\cap K_{k}\not=\emptyset}
\diagup\{L_{\lambda}\}_{(K_{k})}).
$$

It is obvious that

$$
\{L_{\lambda}\}_{L_{\lambda}\cap K=\emptyset}=
\{I_{i}\}_{I_{i}\cap M=\emptyset}\cup
(\bigcup_{j,\mathcal{R}_{J_{j}}\cap K=\emptyset}
\{I_{i}\}_{I_{i}\cap J_{j}\not=\emptyset}
\diagup\{I_{i}\}_{(J_{j})}).
$$

Noting $M_{k}=K_{k}\cup(\bigcup_
{\mathcal{R}_{J_{j}}\cap K_{k}\not=\emptyset}J_{j})$,
hence, we have

$$
\mathcal{R}_{\{L_{\lambda}\},K_{k}}\setminus K_{k}=
\mathcal{R}_{\{I_{i}\},M_{k}}\setminus M_{k}.
$$
Above formula implies

$$
\{L_{\lambda}\}_{L_{\lambda}\cap K_{k}\not=\emptyset}
\diagup\{L_{\lambda}\}_{(K_{k})}=
\{I_{i}\}_{I_{i}\cap M_{k}\not=\emptyset}
\diagup\{I_{i}\}_{(M_{k})},
$$
where $1\leq k\leq q$.

Up to now we have proved the proposition.

\end{proof}

As the corollary of theorem 2.1
we have:

\begin{proposition}
The coproduct in definition 2.6 satisfies

\begin{equation}
(\bigtriangleup\otimes id)\bigtriangleup
=(id\otimes\bigtriangleup)\bigtriangleup.
\end{equation}
\end{proposition}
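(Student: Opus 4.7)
The strategy is to expand both sides of the coassociativity identity explicitly and exhibit a term-by-term bijection between the indexing sets of the two resulting sums, with the bijection supplied by Proposition 2.4. The trivial terms involving $\emptyset$ match in the obvious way, so I will focus on the non-trivial double sums. It suffices to treat the case $\{I_i\}\in\mathcal{P}_{dis}^{2}(A)$; the extension to tuples in $\mathcal{P}_{dis,k}^{2}(A)$ is componentwise.

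Unfolding $\bigtriangleup\{I_i\}$ and then applying the single-partition version of $\bigtriangleup$ to the right factor, $(\mathrm{id}\otimes\bigtriangleup)\bigtriangleup\{I_i\}$ becomes a sum indexed by pairs $(\{J_j\},\{K_k\})$ with $\{J_j\}\sqsubset\{I_i\}$ and $\{K_k\}\sqsubset\{I_i\}\diagup(J_j)$, whose generic summand is
$$(\ldots,\{I_i\}_{(J_j)},\ldots)\;\otimes\;(\ldots,(\{I_i\}\diagup(J_j))_{(K_k)},\ldots)\;\otimes\;(\{I_i\}\diagup(J_j))\diagup(K_k).$$
In contrast, $(\bigtriangleup\otimes\mathrm{id})\bigtriangleup\{I_i\}$ requires the tuple version of $\bigtriangleup$ (item 2 of Definition 2.5) applied to the first factor, and yields a sum indexed by pairs $(\{M_\mu\},\{J'_j\})$ with $\{M_\mu\}\sqsubset\{I_i\}$ and $\{J'_j\}\sqsubset(\ldots,\{I_i\}_{(M_\mu)},\ldots)$, with generic summand
$$(\ldots,\{I'\}_{(J'_j)},\ldots)\;\otimes\;(\ldots,\{I_i\}_{(M_\mu)},\ldots)\diagup(J'_j)\;\otimes\;\{I_i\}\diagup(M_\mu),$$
where $\{I'\}=\bigcup_\mu\{I_i\}_{(M_\mu)}$.

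The bijection sends $(\{J_j\},\{K_k\})$ on the right-hand side to $(\{M_\mu\},\{J'_j\}:=\{J_j\})$ on the left-hand side, where $\{M_\mu\}$ is precisely the partition constructed in the proof of Proposition 2.4. The three identities there then match the three tensor factors: the third factor by $(\{I_i\}\diagup(J_j))\diagup(K_k)=\{I_i\}\diagup(M_\mu)$, the middle factor by $(\ldots,(\{I_i\}\diagup(J_j))_{(K_k)},\ldots)=(\ldots,\{I_i\}_{(M_\mu)},\ldots)\diagup(J'_j)$, and the first factor because each $J_j$ sits inside a single block of $\{M_\mu\}$ by admissibility, so $\{I'\}_{(J_j)}=\{I_i\}_{(J_j)}$ even after the regrouping of blocks.

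The main obstacle is to verify that this correspondence is a bijection. Injectivity follows once $\{K_k\}$ is read off canonically from the pair $(\{M_\mu\},\{J_j\})$: comparing with the explicit formula for $M_k$ in the proof of Proposition 2.4, each $K_k$ is recovered as the block of $(\ldots,\{I_i\}_{(M_\mu)},\ldots)\diagup(J_j)$ that sits inside $\{I_i\}\diagup(J_j)$, so the pair $(\{J_j\},\{K_k\})$ is determined by $(\{M_\mu\},\{J_j\})$. Surjectivity amounts to checking that every admissible pair $(\{M_\mu\},\{J'_j\})$ on the left-hand side arises in this way from some $(\{J_j\},\{K_k\})$, which is again a reversal of the explicit formula for $M_k$. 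Once bijectivity is in hand, the termwise matching above delivers coassociativity.
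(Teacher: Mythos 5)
Your proof follows essentially the same route as the paper, which states this proposition without proof as a direct corollary of Proposition 2.4: the three identities of that proposition are exactly the term-by-term matching of the three tensor factors. Your write-up actually supplies more than the paper does, since you also address the bijectivity of the correspondence $(\{J_{j}\},\{K_{k}\})\leftrightarrow(\{M_{\mu}\},\{J_{j}\})$ and the equality $\{I'\}_{(J_{j})}=\{I_{i}\}_{(J_{j})}$ in the first factor, both of which the paper leaves implicit.
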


\begin{proof}
	
For simplicity, we will replace the coproduct $\bigtriangleup$
in (2.13) by the reduce coproduct $\bigtriangleup^{\prime}$,
where 

$$
\bigtriangleup^{\prime}(\cdot)=\bigtriangleup(\cdot)
-(\cdot)\otimes\emptyset-\emptyset\otimes(\cdot).
$$	
Here we only give the proof in the situation of

$$
(\bigtriangleup^{\prime}\otimes id)\bigtriangleup^{\prime}(\{I_{i}\})
=(id\otimes\bigtriangleup^{\prime})\bigtriangleup^{\prime}(\{I_{i}\}),
\,\,\,(\ast)
$$
where $\{I_{i}\}\in\mathcal{P}^{2}_{dis}(A)$.
The general situation is similar.

Recalling the formula (2.11) in definition 2.6,
we have

$$
\bigtriangleup^{\prime}\{I_{i}\}=
\sum\limits_{\{J_{1},\cdots,J_{l}\}\sqsubset\{I_{i}\}}
(\{I_{i}\}_{(J_{1})}\cdots,\{I_{i}\}_{(J_{l})})
\otimes\{I_{i}\}\diagup(J_{j}).
$$
The right side of ($\ast$) shoula be
of the following form,

$$
\begin{array}{c}
(id\otimes\bigtriangleup^{\prime})
\bigtriangleup^{\prime}(\{I_{i}\}) \\
=\sum\limits_{\{J_{1},\cdots,J_{l}\}\sqsubset\{I_{i}\}}
(\{I_{i}\}_{(J_{1})}\cdots,\{I_{i}\}_{(J_{l})})
\otimes\bigtriangleup^{\prime}
(\{I_{i}\}\diagup(J_{j})) \\
=\sum\limits_{\{J_{j}\}\sqsubset\{I_{i}\},
\{K_{k}\}\sqsubset\{L_{\lambda}\}} 
(\{I_{i}\}_{(J_{j})})\otimes
(\{L_{\lambda}\}_{(K_{k})})\otimes(\{L_{\lambda}\}
\diagup(K_{k})),
\end{array}
$$
where $\{L_{\lambda}\}=\{I_{i}\}\diagup(J_{j})$.
Recalling proposition 2.8, we know
that there is $\{M_{\mu}\}\in\mathcal{P}^{2}_{dis}(A)$
satiafying the following conditions:

$$
\begin{array}{c}
\{M_{\mu}\}\sqsubset\{I_{i}\}$,
$\{J_{j}\}\sqsubset(\cdots,\{I_{i}\}_{(M_{\mu})},\cdots), \\
(\cdots,(\{I_{i}\}\diagup(J_{j}))_{(K_{k})},\cdots)
=(\cdots,\{I_{i}\}_{(M_{\mu})},\cdots)\diagup(J_{j}), \\
(\{I_{i}\}\diagup(J_{j}))\diagup(K_{k})
=\{I_{i}\}\diagup(M_{\mu}),
\end{array}
$$
then, we have

$$
\begin{array}{c}
(id\otimes\bigtriangleup^{\prime})
\bigtriangleup^{\prime}(\{I_{i}\}) \\
=\sum\limits_{\{M_{\mu}\}\sqsubset\{I_{i}\},
\{J_{j}\}\sqsubset(\{I_{i}\}_{(M_{\mu})})} 
(\{I_{i}\}_{(M_{\mu}\cap J_{j})})\otimes
(\{I_{i}\}_{(M_{\mu})})\diagup(J_{j})
\otimes(\{I_{i}\}\diagup(M_{\mu})).
\end{array}
$$
Because $\mathcal{R}(\{J_{j}\})
\subset\mathcal{R}(\{M_{\mu}\})$,
we have $(\{I_{i}\}_{(M_{\mu}\cap J_{j})})
=(\{I_{i}\}_{(J_{j})})$.
Finally, we have

$$
\begin{array}{c}
(id\otimes\bigtriangleup^{\prime})
\bigtriangleup^{\prime}(\{I_{i}\})\\
=\sum\limits_{\{M_{\mu}\}\sqsubset\{I_{i}\},
\{J_{j}\}\sqsubset(\{I_{i}\}_{(M_{\mu})})} 
(\{I_{i}\}_{(J_{j})})\otimes
(\{I_{i}\}_{(M_{\mu})})\diagup(J_{j})
\otimes(\{I_{i}\}\diagup(M_{\mu})) \\
=(\bigtriangleup^{\prime}\otimes id)
\bigtriangleup^{\prime}(\{I_{i}\}). 
\end{array}
$$

\end{proof}

\begin{remark}
Proposition 2.9 means that $V_{\mathbb{K},A}$ 
is a coalgebra under the coproduct in definition 2.6.
It is easy to check that for each $\{I_{i}\}\in\mathcal{P}_{dis}^{2}(A)$
(or $(\{I_{i_{1}}^{(1)}\},\cdots,\{I_{i_{k}}^{(k)}\})
\in\mathcal{P}_{dis,\,k}^{2}(A)$)
 $(\bigtriangleup^{\prime})^{m}(\{I_{i}\})=0$
 (or $(\bigtriangleup^{\prime})^{m}
 ((\{I_{i_{1}}^{(1)}\},\cdots,\{I_{i_{k}}^{(k)}\}))=0$)
for some positive integer $m$,
thus, from $V_{\mathbb{K},A}$ we can construct
a Hopf algebra in a standard way (see D. E. Radford \cite{8}).
\end{remark}

\section{Composition and Lie bracket}

Let $\mathbb{K}$ be a field of characteristic zero,
$\mathbb{K}(\mathcal{P}_{dis}^{2}(A))$ denote
the vector space over $\mathbb{K}$ spanned
by $\mathcal{P}_{dis}^{2}(A)$.
We now go to the discussion of the composition
between $\{I_{i}\}$ and $\{J_{j}\}$,
where $\{I_{i}\},\{J_{j}\}\in \mathcal{P}_{dis}^{2}(A)$,
$\mathcal{R}(\{I_{i}\})\cap\mathcal{R}(\{J_{j}\})=\emptyset$. 
When we focus on the composition, we restrict us
to consider the non-travail insertion only.
Similar to the cases of Feymman diagrams and
Kontsevich's graphs, we have:

\begin{definition}
We define
\begin{equation}
\{I_{i}\}\circ_{a}\{J_{j}\}=\sum\limits_{\iota}
\{I_{i}\}\circ_{I_{a}}^{\iota}\{J_{j}\},
\end{equation}
and
\begin{equation}
\{I_{i}\}\circ\{J_{j}\}=\sum\limits
_{a}\{I_{i}\}\circ_{a}\{J_{j}\}.
\end{equation}
\end{definition}

The composition $\circ$ defined by (3.2)
is not associative generally. Let 
$\{I_{i}\},\{J_{j}\},\{K_{k}\}\in \mathcal{P}_{dis}^{2}(A)$
satisfying $I\cap J=\emptyset$,
$K\cap(I\cup J)=\emptyset$, where
$I=\mathcal{R}(\{I_{i}\})$, $J=\mathcal{R}(\{J_{j}\})$
and $K=\mathcal{R}(\{K_{k}\})$.
We are interested in the difference
between $(\{I_{i}\}\circ\{J_{j}\})\circ\{K_{k}\}$
and $\{I_{i}\}\circ(\{J_{j}\}\circ\{K_{k}\})$.
In general, we have
$$
(\{I_{i}\}\circ\{J_{j}\})\circ\{K_{k}\}\not=
\{I_{i}\}\circ(\{J_{j}\}\circ\{K_{k}\}).
$$
Observing $(\{I_{i}\}\circ\{J_{j}\})\circ\{K_{k}\}$,
by (3.1) and (3.2) we know that

$$
(\{I_{i}\}\circ\{J_{j}\})\circ\{K_{k}\}=
(\sum\limits_{a,\iota}\{I_{i}\}
\circ_{I_{a}}^{\iota}\{J_{j}\})\circ\{K_{k}\},
$$
and

$$
(\{I_{i}\}\circ_{I_{a}}^{\iota}\{J_{j}\})\circ\{K_{k}\}
=\sum\limits_{b,\tau,a\not=b}
(\{I_{i}\}\circ_{I_{a}}^{\iota}\{J_{j}\})
\circ_{I_{b}}^{\tau}\{K_{k}\}+
\sum\limits_{c,\kappa}(\{I_{i}\}\circ_{I_{a}}^{\iota}\{J_{j}\})
\circ_{J_{c}\cup\iota^{-1}(J_{c})}^{\kappa}\{K_{k}\}.
$$
Recalling proposition 2.5 we know that

$$
\{I_{i}\}\circ_{I_{a}}^{\iota}\{J_{j}\})
\circ_{J_{c}\cup\iota^{-1}(J_{c})}^{\kappa}\{K_{k}\}
=\{I_{i}\}\circ_{I_{a}}^{\iota^{\prime}}
(\{J_{j}\}\circ_{J_{c}}^{\kappa^{\prime}}\{K_{k}\})
$$
for some $\iota^{\prime}$ and $\kappa^{\prime}$.
By proposition 2.4 we know that,
for fixed $I_{a}$ and $J_{c}$, there is an
one-one corresponding between $(\iota,\kappa)$
and $(\iota^{\prime},\kappa^{\prime})$.
Summarizing previous discussions, we reach
the following formula.

\begin{proposition}
\begin{equation}
(\{I_{i}\}\circ\{J_{j}\})\circ\{K_{k}\}=
\sum\limits_{a,b,a\not=b}\sum\limits_{\iota,\tau}
(\{I_{i}\}\circ_{I_{a}}^{\iota}\{J_{j}\})
\circ_{I_{b}}^{\tau}\{K_{k}\}+
\{I_{i}\}\circ(\{J_{j}\}\circ\{K_{k}\}).
\end{equation}
\end{proposition}

We introduce a compact notation

$$
\textlangle \{I_{i}\},\{J_{j}\},\{K_{k}\}\textrangle
=\sum\limits_{a,b,a\not=b}\sum\limits_{\iota,\tau}
(\{I_{i}\}\circ_{I_{a}}^{\iota}\{J_{j}\})
\circ_{I_{b}}^{\tau}\{K_{k}\},
$$
then the formula (3.3) can be rewritten as 

$$
(\{I_{i}\}\circ\{J_{j}\})\circ\{K_{k}\}=
\{I_{i}\}\circ(\{J_{j}\}\circ\{K_{k}\})+
\textlangle \{I_{i}\},\{J_{j}\},\{K_{k}\}\textrangle.
$$
Now we have

\begin{corollary}
$$
\textlangle \{I_{i}\},\{J_{j}\},\{K_{k}\}\textrangle
=\textlangle \{I_{i}\},\{K_{k}\},\{J_{j}\}\textrangle.
$$
\end{corollary}

\begin{proof}
Noting

$$
(\{I_{i}\}\circ_{I_{a}}^{\iota}\{J_{j}\})
\circ_{I_{b}}^{\tau}\{K_{k}\}=
\{I_{i}\}_{i\not=a,b}\cup(I_{a}\sqcup_{\iota}\{J_{j}\})
\cup(I_{b}\sqcup_{\tau}\{K_{k}\}),
$$
the conclusion will be implied immediately.
\end{proof}

\begin{corollary}
$$
\begin{array}{c}
\{I_{i}\}\circ(\{J_{j}\}\circ\{K_{k}\})-
(\{I_{i}\}\circ\{J_{j}\})\circ\{K_{k}\} \\
=\{I_{i}\}\circ(\{K_{k}\}\circ\{J_{j}\})-
(\{I_{i}\}\circ\{K_{k}\})\circ\{J_{j}\}.
\end{array}
$$
\end{corollary}

Based on the discussion as above we are able to define
the Lie bracket for partitions.

\begin{definition}
Let $\{I_{i}\},\{J_{j}\}\in \mathcal{P}_{dis}^{2}(A)$ 
with $\mathcal{R}(\{I_{i}\})\cap \mathcal{R}(\{J_{j}\})=\emptyset$,
their Lie bracket is defined to be
\begin{equation}
[\{I_{i}\},\{J_{j}\}]=\{I_{i}\}\circ\{J_{j}\}-\{J_{j}\}\circ\{I_{i}\}.
\end{equation}
\end{definition}

In order to prove the bracket (3.4) is Lie bracket,
it is enough for us to check that Jacobi identity is valid.

\begin{theorem}
The bracket (3.4) satisfies Jacobi identity.
\end{theorem}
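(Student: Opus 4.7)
The plan is to verify $[[X,Y],Z]+[[Y,Z],X]+[[Z,X],Y]=0$ directly by expansion and then invoke Proposition~3.2 together with Corollary~3.1; no combinatorial input beyond these is needed. Write $X,Y,Z$ for the three partitions, assumed to have pairwise disjoint supports, with $\circ$ extended bilinearly. Expanding each double bracket via $[U,V]=U\circ V-V\circ U$ produces twelve summands: six of left-associated form $(W_1\circ W_2)\circ W_3$ arising from the $[\cdot,\cdot]\circ W$ half of each outer bracket, and six of right-associated form $W_1\circ(W_2\circ W_3)$ from the $-W\circ[\cdot,\cdot]$ half.

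The next step is to apply Proposition~3.2 to each of the six left-associated terms, writing
$$
(W_1\circ W_2)\circ W_3 = W_1\circ(W_2\circ W_3)+B(W_1,W_2,W_3),
$$
where I abbreviate $B(W_1,W_2,W_3):=(W_1\circ W_2)\circ W_3(W_3\hookrightarrow W_1)$ for the ``sister-insertion'' part in which $W_2$ and $W_3$ are placed in distinct slots of $W_1$. After this substitution the Jacobi sum decomposes into twelve fully nested expressions $W_1\circ(W_2\circ W_3)$ (six newly produced and six originally present) together with six $B$-expressions.

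The twelve nested terms then cancel pairwise. For each permutation $W_1\circ(W_2\circ W_3)$ of $\{X,Y,Z\}$ there is a $+1$ coefficient produced by the substitution inside one double bracket and a $-1$ coefficient appearing explicitly as a $-W\circ[\cdot,\cdot]$ summand of another double bracket; for example, $X\circ(Y\circ Z)$ acquires $+1$ from the expansion of $(X\circ Y)\circ Z$ inside $[[X,Y],Z]$ and $-1$ from the explicit term $-X\circ(Y\circ Z)$ inside $[[Y,Z],X]$. The six $B$-terms pair up by fixing the outer argument: the two $B$-terms with the same $W_1$ carry opposite signs, and Corollary~3.1 gives $B(W_1,W_2,W_3)=B(W_1,W_3,W_2)$, so each such pair cancels and Jacobi is proved.

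The only real difficulty is bookkeeping, specifically keeping the twelve signs straight and matching each $B$-term with the Corollary~3.1 partner that annihilates it. All of the substantive content lies in Proposition~3.2 (associativity up to the sister-insertion correction) and Corollary~3.1 (symmetry of that correction in its last two arguments); once these are granted, the proof is formal, and I would lay out the twelve terms in a short table to make the cancellations transparent.
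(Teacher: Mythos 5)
Your proof is correct and follows essentially the same route as the paper: expand the cyclic sum, use Proposition~3.2 to convert each left-associated term into a nested term plus a ``$(W_3\hookrightarrow W_1)$'' correction, observe that the twelve nested terms cancel in pairs, and then cancel the six remaining correction terms in pairs of equal outer argument via Corollary~3.1. The only difference is that you make the bookkeeping explicit where the paper compresses it into ``a straightforward calculation,'' and you correctly read the disjointness hypothesis that the theorem's statement garbles as $\not=\emptyset$.
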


\begin{proof}
Let 
$\{I_{i}\},\{J_{j}\},\{K_{k}\}\in \mathcal{P}_{dis}^{2}(A)$
satisfying $\mathcal{R}(\{I_{i}\})\cap\mathcal{R}(\{J_{j}\})\not=\emptyset$,
$\mathcal{R}(\{I_{i}\})\cap\mathcal{R}(\{K_{k}\})\not=\emptyset$,
$\mathcal{R}(\{J_{j}\})\cap\mathcal{R}(\{K_{k}\})\not=\emptyset$,
we want to prove Jacobi identity
$$
[\{I_{i}\},[\{J_{j}\},\{K_{k}\}]]+\,cocycle\,=0.
$$
By a straightforward calculation we get

$$
\begin{array}{c}
[\{I_{i}\},[\{J_{j}\},\{K_{k}\}]] \\
=\{I_{i}\}\circ(\{J_{j}\}\circ\{K_{k}\})
-\{I_{i}\}\circ(\{K_{k}\}\circ\{J_{j}\}) \\
-(\{J_{j}\}\circ\{K_{k}\})\circ\{I_{i}\}
+(\{K_{k}\}\circ\{J_{j}\})\circ\{I_{i}\},
\end{array}
$$

$$
\begin{array}{c}
[\{K_{k}\},[\{I_{i}\},\{J_{j}\}]] \\
=\{K_{k}\}\circ(\{I_{i}\}\circ\{J_{j}\})
-\{K_{k}\}\circ(\{J_{j}\}\circ\{I_{i}\}) \\
-(\{I_{i}\}\circ\{J_{j}\})\circ\{K_{k}\}
+(\{J_{j}\}\circ\{I_{i}\})\circ\{K_{k}\},
\end{array}
$$

$$
\begin{array}{c}
[\{J_{j}\},[\{K_{k}\},\{I_{i}\}]] \\
=\{J_{j}\}\circ(\{K_{k}\}\circ\{I_{i}\})
-\{J_{j}\}\circ(\{I_{i}\}\circ\{K_{k}\}) \\
-(\{K_{k}\}\circ\{I_{i}\})\circ\{J_{j}\}
+(\{I_{i}\}\circ\{K_{k}\})\circ\{J_{j}\}.
\end{array}
$$
With the help of corollary 3.2
we can get Jacobi identity.
\end{proof}

The previous discussions can be generalized
to more general situations. We assume that
every partition $\{I_{i}\}\in\mathcal{P}_{dis}^{2}(A)$ 
assigns to a function $f:\{I_{i}\}\rightarrow\mathbb{N}$. 
Now we consider the Lie bracket concerning
the pair $(\{I_{i}\},f)$. Let $(\{I_{i}\},f)$,
$(\{J_{j}\},g)$ be two pairs, where $\{I_{i}\},\{J_{j}\}
\in\mathcal{P}_{dis}^{2}(A)$, $\mathcal{R}(\{I_{i}\})
\cap\mathcal{R}(\{J_{j}\})=\emptyset$. Then the
insertion of two pairs is defined to be

\begin{equation}
(\{I_{i}\},f)\circ_{I_{a}}^{\iota}(\{J_{j}\},g)
=(\{I_{i}\}\circ_{I_{a}}^{\iota}\{J_{j}\},f\circ_{a}g),
\end{equation}
where $f\circ_{a}g$ is defined as follows:

$$
f\circ_{a}g(I_{i})=f(I_{i})\,(i\not=a),\,\,
f\circ_{a}g(\iota^{=1}(J_{j})\cup J_{j})
=g(J_{j}).
$$
Above formula shows that $f\circ_{a}g$ is
independent of $\iota$. Now we define the composition
of the pairs to be

\begin{equation}
(\{I_{i}\},f)\circ_{a}(\{J_{j}\},g)=
\sum\limits_{\iota}(\{I_{i}\},f)
\circ_{I_{a}}^{\iota}(\{J_{j}\},g),
\end{equation}
and

\begin{equation}
(\{I_{i}\},f)\circ_{pair}(\{J_{j}\},g)
=\sum\limits_{a}(-1)^{f(I_{a})}
(\{I_{i}\},f)\circ_{a}(\{J_{j}\},g).
\end{equation}

We hope that the Lie bracket arising from
the composition $\circ_{pair}$ will be well defined.
hence, we need to prove a conclusion similar to
corollary 3.2 is valid. Let $(\{I_{i}\},f)$,
$(\{J_{j}\},g)$, $(\{K_{k}\},h)$ be three pairs,
it is enough for us to consider the composition 
$(f\circ_{a}g)\circ_{b}h$. Similar to the situation
of proposition 3.1, it is necessary for us 
to discuss the following two possibilities:

\begin{itemize}
\item $\mathbf{Case\,\,of\,\,(\{I_{i}\}
\circ_{I_{a}}^{\iota}\{J_{j}\})
\circ_{I_{b}}^{\tau}\{K_{k}\}\,(a\not=b):}$
In this situation we have

$$
\left\{
\begin{array}{c}
((f\circ_{a}g)\circ_{b}h)(I_{i})
=(f\circ_{a}g)(I_{i})=
f(I_{i}),\,i\not=a,b, \\
((f\circ_{a}g)\circ_{b}h)(\iota^{-1}(J_{j})\cup J_{j})
=(f\circ_{a}g)(\iota^{-1}(J_{j})\cup J_{j})=g(J_{j}), \\
((f\circ_{a}g)\circ_{b}h)(\tau^{-1}(K_{k})\cup K_{k})=h(K_{k}).
\end{array}
\right.
$$
In this situation we have

$$
((f\circ_{a}g)\circ_{b}h)=
((f\circ_{b}h)\circ_{a}g).
$$

\item $\mathbf{Case\,\,of\,\,(\{I_{i}\}
\circ_{I_{a}}^{\iota}\{J_{j}\})
\circ_{\iota^{-1}(J_{c})\cup 
J_{c}}^{\kappa}\{K_{k}\}\,(a\not=b):}$
In this situation we have

$$
\left\{
\begin{array}{c}
((f\circ_{a}g)\circ_{b}h)(I_{i})=
(f\circ_{a}g)(I_{i})=f(I_{i}),\,i\not=a, \\
((f\circ_{a}g)\circ_{b}h)(\iota^{-1}(J_{j})\cup J_{j})
=(f\circ_{a}g)(\iota^{-1}(J_{j})\cup J_{j})=g(J_{j}),\,j\not=c, \\
((f\circ_{a}g)\circ_{b}h)(\kappa^{-1}(K_{k})\cup K_{k})
=h(K_{k}).
\end{array}
\right.
$$
In this situation, it is easy to check that

$$
(f\circ_{a}g)\circ_{b}h=
f\circ_{a}(g\circ_{c}h).
$$
\end{itemize}

Similar to previous discussions, we 
introduce the following notation

$$
\begin{array}{c}
\textlangle(\{I_{i}\},f),(\{J_{j}\},g),
(\{K_{k}\},h)\textrangle_{pair} \\
=\sum\limits_{a,b,a\not=b}(-1)^{f(I_{a})+f(I_{b})}
\sum\limits_{\iota,\tau}
((\{I_{i}\}\circ_{I_{a}}^{\iota}\{J_{j}\})
\circ_{I_{b}}^{\tau}\{K_{k}\},(f\circ_{a}g)\circ_{b}h).
\end{array}
$$
Now we reach the following conclusion:

\begin{proposition}

\begin{equation}
\begin{array}{c}
((\{I_{i}\},f)\circ_{pair}(\{J_{j}\},g))
\circ_{pair}(\{K_{k}\},h) \\
=\textlangle(\{I_{i}\},f),(\{J_{j}\},g),
(\{K_{k}\},h)\textrangle_{pair}+
(\{I_{i}\},f)\circ_{pair}((\{J_{j}\},g)
\circ_{pair}(\{K_{k}\},h)).
\end{array}
\end{equation}

\begin{equation}
\textlangle(\{I_{i}\},f),(\{J_{j}\},g),
(\{K_{k}\},h)\textrangle_{pair}=
\textlangle(\{I_{i}\},f),(\{K_{k}\},h),
(\{J_{j}\},g)\textrangle_{pair}.
\end{equation}
\end{proposition}

Up to now, we can easyly see
the conclusion similar to corollary 3.2
is valid.

\begin{corollary}
\begin{equation}
\begin{array}{c}
((\{I_{i}\},f)\circ_{pair}(\{J_{j}\},g))
\circ_{pair}(\{K_{k}\},h)
-(\{I_{i}\},f)\circ_{pair}((\{J_{j}\},g)
\circ_{pair}(\{K_{k}\},h)) \\
=((\{I_{i}\},f)\circ_{pair}(\{K_{k}\},h))
\circ_{pair}(\{J_{j}\},g)-
(\{I_{i}\},f)\circ_{pair}((\{K_{k}\},h)
\circ_{pair}(\{J_{j}\},g)).
\end{array}
\end{equation}
\end{corollary}

Summarizing the previous discussions,
we know that the following Lie bracket
will be well defined.

\begin{definition}
\begin{equation}
[(\{I_{i}\},f),(\{J_{j}\},g)]_{pair}=
(\{I_{i}\},f)\circ_{pair}(\{J_{j}\},g)-
(\{J_{j}\},g)\circ_{pair}(\{I_{i}\},f).
\end{equation}
\end{definition}

\section{Applications to graphs}

The construction discussed in section 2 and
section 3 is suitable for graphs, for example, Feymman
diagrams, Kontesvich's graphs and the ordinary graphs
in the sense of graphic theory.
For Feymman diagrams,
we follow the notations in Jean-Louis Loday and N. M. Nikolov \cite{5},
actually, which is one of our motivation for the
construction in this article.
For simplicity, we do not
consider the coloured Feymman diagrams.
As a preparation we introduce some
notations. Let $A$ be a finite set,
$\sigma:A\to A$ be an involution,
$\sigma^{2}=\sigma$. A subset $J\subset A$ is called 
a $\mathbf{\sigma-invariant}$ subset, if $J=\sigma(J)$.
Actually, for a subset $J\subset A$, it is easy to
check that $J\cap \sigma(J)$ and
$J\cup \sigma(J)$ are $\sigma-$invariant.
Roughly speaking, a graph can be viewed as
a finite set endowed with an involution
and a decomposition.

\subsection{The graphs in the sense of graphic theory}

\begin{definition}
\begin{itemize}
	
\item A ordinary graph is a pair $(\sigma,\{I_{i}\})$,
where $\{I_{i}\}\in\mathcal{P}_{dis}^{2}(A)$,
$\sigma$ is a map from $I$ to itself without 
fixed points, $\sigma^{2}=\sigma$,
where $I=\mathcal{R}(\{I_{i}\})$. 
A graph $(\sigma,\{I_{i}\})$ is also
denoted by $\varGamma_{\sigma,\{I_{i}\}}$,
or, $\varGamma_{\{I_{i}\}}$ for short.

\item If for any $I_{i}$ and $I_{i^{\prime}}$, there
are some positive integers $i_{0},i_{1},\cdots,i_{m}$
($i_{0}=i,\,i_{m}=i^{\prime}$), such that
$\sigma(I_{k})\cap I_{k+1}\not=\emptyset$
($0\leq k< m$), we say $\varGamma_{\{I_{i}\}}$
is a connected graph. Otherwise, we say
$\varGamma_{\{I_{i}\}}$ is disconnected.
We call $I_{i_{0}},I_{i_{1}},\cdots,I_{i_{m}}$
the chain in $\varGamma_{\{I_{i}\}}$ 
connecting $I_{i}$ and $I_{i^{\prime}}$.

\item Let $(\sigma,\{I_{i}\})$ be an ordinary graph,
$J$ be a $\sigma-$invariant subset of $I$.
A subgraph of $\varGamma_{\{I_{i}\}}$ 
related to $J$ is
a pair $(\sigma|_{J},\{I_{i}\}_{(J)})$,
denoted by $\varGamma_{J}\subset \varGamma_{\{I_{i}\}}$ also.
When $J=\mathcal{R}_{\{I_{i}\},J}\cap\sigma
(\mathcal{R}_{\{I_{i}\},J})$, we call $\varGamma_{J}$
a induced subgraph.

\end{itemize}
\end{definition}

\begin{remark}
\begin{itemize}
	
\item We call $\{I_{i}\}$ the set of vertices of
$\varGamma_{\{I_{i}\}}$, denoted by 
$\mathbf{Vert}(\varGamma_{\{I_{i}\}})$.
$I$ is called the total set of $\varGamma_{\{I_{i}\}}$.
For a subhraph $\varGamma_{J}$, we identify
$\mathbf{Vert}(\varGamma_{J})=\{I_{i}\cap J\}$
with $\{I_{i}\}_{I_{i}\cap J\not=\emptyset}$.

\item By the assumption of the involution $\sigma$,
we know that there is no $e\in I$ such that
$\sigma(e)=e$. Now we define an equivalent relation
in $I$ in the following way. We say $e_{1}\sim e_{2}$
($e_{1},e_{2}\in I$), if $e_{1}=\sigma(e_{2})$.
It is easy to check that $\sim$ is an equivalent relation,
and each equivalent class consists of two
pairs $(e,\sigma(e))$ and $(\sigma(e),e)$ ($e\in I$).
Each equivalent class assigns an edge of 
$\varGamma_{\{I_{i}\}}$, then $I\diagup\sim$ is the 
set of all edges of $\varGamma_{\{I_{i}\}}$.

\item Let $\tau:\underline{m}\rightarrow\underline{m}$
($m=|\{I_{i}\}|$) be a permutation, we identify
$(\sigma,\{I_{i}\})$ with $(\sigma,\{I_{\tau(i)}\})$.
\end{itemize}	
\end{remark}

\begin{lemma}
	Let $\varGamma_{\sigma,\{I_{i}\}}$ be a
	graph, $\varGamma_{J}$ and $\varGamma_{J^{\prime}}$
	are two connected subgraphs of $\varGamma_{\sigma,\{I_{i}\}}$.
	If $\mathcal{R}_{\{I_{i}\},J}\cap
	\mathcal{R}_{\{I_{i}\},J^{\prime}}\not=\emptyset$,
	then $\varGamma_{J\cup J^{\prime}}$ is a connected
	subgraph of $\varGamma_{\sigma,\{I_{i}\}}$.
\end{lemma}

\begin{proof}
	We need to prove a fact i.e. 
	for any $I_{i}$ and $I_{i^{\prime}}$ with
	$I_{i}\cap J\not=\emptyset$ and
	$I_{i^{\prime}}\cap J^{\prime}\not=\emptyset$
	(or $I_{i}\subset\mathcal{R}_{\{I_{i}\},J}$ and
	$I_{i^{\prime}}\subset\mathcal{R}_{\{I_{i}\},J^{\prime}}$)	
	there is a chain in $\varGamma_{J\cup J^{\prime}}$
	connecting $I_{i}\cap(J\cup J^{\prime})$ and 
	$I_{i^{\prime}}\cap(J\cup J^{\prime})$.
	Because of $\mathcal{R}_{\{I_{i}\},J}\cap
	\mathcal{R}_{\{I_{i}\},J^{\prime}}\not=\emptyset$,
	there is $I_{\alpha}$ such that
	$I_{\alpha}\subset\mathcal{R}_{\{I_{i}\},J}$ and 
	$I_{\alpha}\subset\mathcal{R}_{\{I_{i}\},J^{\prime}}$,
	which means $I_{\alpha}\cap J\not=\emptyset$ and
	$I_{\alpha}\cap J^{\prime}\not=\emptyset$.
	Noting both $\varGamma_{J}$ and $\varGamma_{J^{\prime}}$
	are connected, thus there is a chain
	in $\varGamma_{J}$ connecting $I_{i}\cap J$ with
	$I_{\alpha}\cap J$, and there is a chain in 
	$\varGamma_{J^{\prime}}$ connecting 
	$I_{i^{\prime}}\cap J^{\prime}$ with
	$I_{\alpha}\cap J^{\prime}$. Above two chains will
	result in a chain in $\varGamma_{J\cup J^{\prime}}$
	connecting $I_{i}\cap(J\cup J^{\prime})$ with
	$I_{i^{\prime}}\cap(J\cup J^{\prime})$. Therefore,
	we have proved the conclusion.
	
\end{proof}

\begin{proposition}
A graph $\varGamma_{\{I_{i}\}_{i=1}^{m}}$ 
is disconnected if and only if 
$\varGamma_{\{I_{i}\}_{i=1}^{m}}$ adapts
the following decomposition 

\begin{equation}
\varGamma_{\{I_{i}\}}=
\bigcup_{j}\varGamma_{J_{j}},
\end{equation}
where $\{J_{j}\}\in\mathcal{P}_{dis}^{2}(A)$
($2\leq|\{J_{j}\}|$) satisfying the following
conditions:

\begin{itemize}
\item $\mathcal{R}(\{J_{j}\})=
\mathcal{R}(\{I_{i}\})$, and
$\{J_{j}\}\sqsubset\{I_{i}\}$.
\item $\sigma(J_{j})=J_{j}$, 
$\varGamma_{J_{j}}$ is connected
subgraph for each $j$.
\item $\mathbf{Vert}(\varGamma_{J_{j}})\cap
\mathbf{Vert}(\varGamma_{J_{j^{\prime}}})=\emptyset,\,\,
j\not=j^{\prime}$.
\end{itemize}
Each $\varGamma_{J_{j}}$ is a connected component
of $\varGamma_{\{I_{i}\}}$.
If we ignore the order in the decomposition (4.1),
the decomposition (4,1) is unique.
\end{proposition}	

\begin{proof}
For $I_{i}$ and $I_{i^{\prime}}$, we 
say $I_{i}\sim I_{i^{\prime}}$, if
there is a chain connecting $I_{i}$ 
and $I_{i^{\prime}}$. $\sim$ is an equivalent
relation obviously. Thus there is a partition
$\{K_{k}\}\in\mathbf{Part}(\underline{m})$
such that 

$$
\{I_{i}\}_{i=1}^{m}=\bigcup\limits_{k}\{I_{i}\}_{i\in K_{k}},
$$
where each $\{I_{i}\}_{i\in K_{k}}$ is
an equivalent class. We take 
$J_{j}=\mathcal{R}(\{I_{i}\}_{i\in K_{j}})$.
We need to prove $\{J_{j}\}$ construted in
such a way satisfies the conditions in proposition. 
it is obvious that $\mathcal{R}(\{J_{j}\})=
\mathcal{R}(\{I_{i}\})$, and
$\{J_{j}\}\sqsubset\{I_{i}\}$. We now 
prove $\sigma(J_{j})=J_{j}$ for each $j$. It is enough for us
to prove for any $j$ and $j^{\prime}$
($j\not=j^{\prime}$), we have
$\sigma(J_{j})\cap J_{j^{\prime}}=\emptyset$.
Otherwise, there are $j$ and $j^{\prime}$
such that $\sigma(J_{j})\cap J_{j^{\prime}}
\not=\emptyset$. Then, there are $i\in K_{j}$
and $i^{\prime}\in K_{j^{\prime}}$,
$\sigma(I_{i})\cap I_{i^{\prime}}\not=\emptyset$,
which means $\{I_{i}\}_{i\in K_{j}}\cup 
\{I_{i}\}_{i\in K_{j^{\prime}}}$
should be included in some equivalent class.
That is a contradiction. The procedure to
construct $J_{j}$ impies each $\varGamma_{J_{j}}$
is a connected subgraph of $\varGamma_{\{I_{i}\}}$.
It is easy to check that for each connected
subgraph $\varGamma_{J}$ there is a $j$
such that $J\subset J_{j}$. Thus each 
$\varGamma_{J_{j}}$ is a connected component
of $\varGamma_{\{I_{i}\}}$. Furthermore,
we know that $\varGamma_{\{I_{i}\}}$ is disconnected
if and only if $|\{J_{j}\}|\geq 2$.
The uniquenessof the decomposition (4.1)
is obvious.

\end{proof}

\begin{remark}
	Let $\varGamma_{\sigma,\{I_{i}\}}$ be a
	graph, $\{J_{j}\}\in\mathcal{P}_{dis}^{2}(A)$.
	If $\{J_{j}\}\sqsubset\{I_{i}\}$ and
	$\sigma(J_{j})=J_{j}$ for any $j$,
	we say $\{J_{j}\}$ $\sigma-$admites to
	$\{I_{i}\}$ denoted by 
	$\{J_{j}\}\sqsubset_{\sigma}\{I_{i}\}$.	
\end{remark}

\begin{proposition}
	Let $\varGamma_{\sigma,\{I_{i}\}}$ be a connected
	graph, $\{J_{j}\}\in\mathcal{P}_{dis}^{2}(A)$ satisfy
	$J\subset I$ and $\sigma(J_{j})=J_{j}$ for any $j$, where
	$I=\mathcal{R}(\{I_{i}\})$, $J=\mathcal{R}(\{J_{j}\})$.
	Then, the subgraph $\varGamma_{J}$ adapts athe following decomposition

\begin{equation}
\varGamma_{J}=\bigcup_{l}\varGamma_{L_{l}},\,\,
\mathbf{Vert}(\varGamma_{L_{l}})\cap
\mathbf{Vert}(\varGamma_{L_{l^{\prime}}})
=\emptyset,\,\,l\not=l^{\prime},
\end{equation}
where $\{L_{l}\}=\{J_{j}\}_{ad,\{I_{i}\}_{(J)}}
\sqsubset\{I_{i}\}_{(J)}$, and each
$\varGamma_{L_{l}}$ is a connected
component of $\varGamma_{J}$.
\end{proposition}

\begin{proof}
According to proposition 4.1, for subgraph
$\varGamma_{J}$	there is the unique decomposition
based on its connected components.
Here we need to prove this decomposition
is exactly given by $\{L_{l}\}=\{J_{j}\}_{ad,\{I_{i}\}_{(J)}}$.
	
It is obvious that $\sigma(L_{l})=L_{(l)}$
for each $l$, thus each $\varGamma_{L_{l}}$
is a subgraph of $\varGamma_{J}$. By proposition 2.6
we know that $\{L_{l}\}\sqsubset\{I_{i}\}_{(J)}$.
In addition, noting $\mathcal{R}(\{L_{l}\})=J$,
it is easy to prove 
$\mathcal{R}_{\{I_{i}\}_{(J)},L_{l}}=L_{l}$
for each $l$. Recalling the procedure to construct $\{L_{l}\}$
in the proof of proposition 2.6, combining
with lemma 4.1, we know that $\varGamma_{L_{l}}$
is a connected subgraph of $\varGamma_{J}$ for each $l$.
It is obvious that as subgraphs of 
$\varGamma_{J}$, $\mathbf{Vert}(\varGamma_{L_{l}})
\cap\mathbf{Vert}(\varGamma_{L_{l^{\prime}}})=\emptyset$
($l\not=l^{\prime}$). Therefore, $\{L_{l}\}$
satisfies all conditions in proposition 4.2,
which means $\varGamma_{L_{l}}$ is a
connected component of $\varGamma_{J}$
for each $l$.

\end{proof}

We now turn to the quotient of the 
ordinary graphs. The discussions below will
follow the idea of Connes-Kriemer theory,
but a different description will be provided
based on the setting in this article.

\begin{definition}
Let $\varGamma_{\sigma,\{I_{i}\}}$ be a connected 
ordinary graph, $\varGamma_{J}$ be a connected 
subgraph of $\varGamma_{\sigma,\{I_{i}\}}$ determined by
a $\sigma-$invariant subset $J\subset I$.
We define the quotient of $\varGamma_{\sigma,\{I_{i}\}}$
by $\varGamma_{J}$ to be a pair
$(\sigma|_{I\setminus J},\{I_{i}\}\diagup\{J_{j}\})$,
denoted by $\varGamma_{\sigma,\{I_{i}\}}\diagup\varGamma_{J}$
also, where $I=\mathcal{R}(\{I_{i}\})$,
$\{J_{j}\}=\{I_{i}\}_{(J)}$.
\end{definition}

\begin{remark}
Let $\{J_{j}\}\in\mathcal{P}_{dis}^{2}(A)$ satisfy
$\mathcal{R}(\{J_{j}\})\subset\mathcal{R}(\{I_{i}\})$
and $\sigma(J_{j})=J_{j}$ for any $j$.
Then, it is natural for us to define
the quotient of $\varGamma_{\sigma,\{I_{i}\}}$
by the sequence of the subgraphs $\{\varGamma_{J_{j}}\}$
to be $(\sigma|_{I\setminus J},\{I_{i}\}\diagup(J_{j}))$,
where $J=\mathcal{R}(\{J_{j}\})$.
Above quotient can also be denoted by
$\varGamma_{\sigma,\{I_{i}\}}\diagup(\varGamma_{J_{j}})$.
Combining proposition 2.7 and proposition 4.1,
we have $\varGamma_{\sigma,\{I_{i}\}}\diagup(\varGamma_{J_{j}})
=\varGamma_{\sigma,\{I_{i}\}}\diagup(\varGamma_{L_{l}})$,
where $\{L_{l}\}=\{J_{j}\}_{ad,\{I_{i}\}_{(J)}}$
and each $\varGamma_{L_{l}}$ is an connected
component of $\varGamma_{J}$. Thus, when we 
discuss the quotient of the graphs, we can always
assume $\{J_{j}\}\sqsubset_{\sigma}\{I_{i}\}_{(J)}$.
\end{remark}

\begin{proposition}
Let $\varGamma_{\sigma,\{I_{i}\}}$ be a connected 
ordinary graph, $\varGamma_{J}$ be a connected 
subgraph of $\varGamma_{\sigma,\{I_{i}\}}$ determined by
a $\sigma-$invariant subset $J\subset I$. Then 
$\varGamma_{\sigma,\{I_{i}\}}\diagup\varGamma_{J}$
is a connected graph.
\end{proposition}

\begin{proof}
If $\mathcal{R}_{J}\setminus J=\emptyset$,
the conclusion is obviously valid.
Now we assume $\mathcal{R}_{J}\setminus J\not=\emptyset$.
Noting $\mathbf{Vert}(\varGamma_{\sigma,
\{I_{i}\}}\diagup\varGamma_{J})=
\{I_{i}\}_{I_{i}\cap J\not=\emptyset}
\cup\{\mathcal{R}_{J}\setminus J\}$,
because $\varGamma_{\sigma,\{I_{i}\}}$
is a connected graph, we know that
$\sigma(\mathcal{R}(\{I_{i}\}_{I_{i}\cap J\not=\emptyset}))
\cap\mathcal{R}_{J}\not=\emptyset$.
We need to prove $\sigma(\mathcal{R}
(\{I_{i}\}_{I_{i}\cap J\not=\emptyset}))
\cap(\mathcal{R}_{J}\setminus J)\not=\emptyset$.
Noting $\mathcal{R}(\{I_{i}\}_{I_{i}\cap J\not=\emptyset})
\cap J=\emptyset$, thus $\sigma(\mathcal{R}
(\{I_{i}\}_{I_{i}\cap J\not=\emptyset}))
\cap J=\emptyset$, which means
$\sigma(\mathcal{R}(\{I_{i}\}_{I_{i}\cap J\not=\emptyset}))
\cap\mathcal{R}_{J}\subset
\mathcal{R}_{J}\setminus J$.

\end{proof}

Combining proposition 4.2 and proposition 4.3
we have the following corollary.

\begin{corollary}
Let $\varGamma_{\sigma,\{I_{i}\}}$ be a connected
graph, $\{J_{j}\}\in\mathcal{P}_{dis}^{2}(A)$ satisfying
$\mathcal{R}(\{J_{j}\})\subset \mathcal{R}(\{I_{i}\})$ 
and $\sigma(J_{j})=J_{j}$ for any $j$.
Then $\varGamma_{\sigma,\{I_{i}\}}
\diagup(\varGamma_{J_{j}})$ is a connected graph.
\end{corollary}

In the situation of the ordinary graphs there is
an analogue of theorem 2.1.

\begin{proposition}
Let $\varGamma_{\sigma,\{I_{i}\}}$ be a
connected graph, $\{J_{j}\},\{K_{k}\}
\in\mathcal{P}_{dis}^{2}(A)$.
$\{I_{i}\},\,\{J_{j}\},\,\{K_{k}\}$
satisfy the following conditions:

\begin{itemize}
\item $\{J_{j}\}\sqsubset\{I_{i}\}$, 
$\{K_{k}\}\sqsubset\{I_{i}\}\diagup(J_{j})$.
\item $J_{j}$ is $\sigma-$invariant
and $\varGamma_{J_{j}}$ is a connected
subgraph in $\varGamma_{\sigma,\{I_{i}\}}$
for each $j$.

\item $K_{k}$ is $\sigma_{I\setminus J}-$
invariant and $\varGamma_{K_{k}}$
is a connected subgraph of
$\varGamma_{\sigma,\{I_{i}\}}
\diagup(\varGamma_{J_{j}})$ for each $k$,
where $I=\mathcal{R}(\{I_{i}\})$,
$J=\mathcal{R}(\{J_{j}\})$. 
\end{itemize}
Then there is a partition 
$\{M_{\mu}\}\in\mathcal{P}_{dis}^{2}(A)$
satisfying the following conditions:
\begin{itemize}
\item $\sigma(M_{\mu})=M_{\mu}$ and $\varGamma_{M_{\mu}}$ 
is connected for each $\mu$.

\item $\{M_{\mu}\}\sqsubset\{I_{i}\}$,
$\{J_{j}\}\sqsubset(\cdots,\{I_{i}\}_{(M_{\mu})},\cdots)$.
		
\item $(\cdots,(\{I_{i}\}\diagup(J_{j}))_{(K_{k})},\cdots)
=(\cdots,\{I_{i}\}_{(M_{\mu})},\cdots)\diagup(J_{j})$.
		
\item $(\varGamma_{\{I_{i}\}}\diagup(\varGamma_{J_{j}}))
\diagup(\varGamma_{K_{k}})=\varGamma_{\{I_{i}\}}
\diagup(\varGamma_{M_{\mu}})$.
\end{itemize}

\end{proposition}

\begin{proof}
The proof of proposition 4.4 is almost
same as the proof of proposition 2.7.
Here we only need to prove each $M_{\mu}$
is $\sigma-$invariant. Recalling proposition 
2.7, we know that

$$
\begin{array}{c}
\{M_{\mu}\}=\{J_{j}\}_{\mathcal{R}_{J_{j}}\cap K=\emptyset}
\cup\{N_{k}\}, \\
N_{k}=K_{k}\cup(\bigcup\limits_
{\mathcal{R}_{J_{j}}\cap K_{k}\not=\emptyset}J_{j}),\,\,
k=1,\cdots,|\{K_{k}\}|,
\end{array}
$$
where $K=\mathcal{R}(\{K_{k}\})$.
It is easy to check that
$\sigma(M_{\mu})=M_{\mu}$ for each $\mu$.

\end{proof}

We now consider the insertion of the ordinary
graphs.  

\begin{definition}
Let $(\sigma,\{I_{i}\}_{i=1}^{m})$, 
$(\lambda,\{J_{j}\}_{j=1}^{n})$
be two connected ordinary graphs,
with structure maps
$\sigma$ and $\lambda$ respectively,
$I=\mathcal{R}(\{I_{i}\}$,
$J=\mathcal{R}(\{J_{j}\}$,
$I\cap J=\emptyset$. Then we define
the insertion of
$\varGamma_{\{J_{j}\}}$ into $\varGamma_{\{I_{i}\}}$ 
at $I_{a}$ by $\iota$ to be a pair 
$(\delta,\{I_{i}\}\circ_{a}^{\iota}\{J_{j}\})$ 
denoted by $\varGamma_{\{I_{i}\}}
\circ_{a}^{\iota}\varGamma_{\{J_{j}\}}$, or,
$\varGamma_{\{I_{i}\}\circ_{a}^{\iota}
\{J_{j}\}}$ also, where
$1\leq a\leq m$, $\iota:I_{a}\to \{J_{j}\}$, 
$\delta|_{I}=\sigma$, $\delta|_{J}=\lambda$.
\end{definition}

It is obvious that we have:

\begin{proposition}
Let $\varGamma_{\sigma,\{I_{i}\}}$,
$\varGamma_{\delta,\{J_{j}\}}$ be
two connected graphs. Then,
$\varGamma_{\{I_{i}\}}
\circ_{a}^{\iota}\varGamma_{\{J_{j}\}}$
is a connected graph also.
\end{proposition}

Based on the discussions in section 3,
we can prove there is a well-defined 
Lie bracket structure on the
vector space $\mathbf{Span}_{\mathbb{K}}
(\mathcal{G}_{c})$, where $\mathcal{G}_{c}$
denotes the set of all connected graphs,
$\mathbb{K}$ is a field of characteristic zero.
Here we only consider the composition
$\circ$ in definition 3.1 for simiplicity.
The composition will be definied in a natural
way. Let $\varGamma_{\sigma,\{I_{i}\}}$,
$\varGamma_{\delta,\{J_{j}\}}$ be
two connected graphs. We have:

\begin{equation}
\varGamma_{\sigma,\{I_{i}\}}\circ
\varGamma_{\delta,\{J_{j}\}}=
\sum\limits_{a,\iota}
\varGamma_{\{I_{i}\}}
\circ_{a}^{\iota}\varGamma_{\{J_{j}\}}.
\end{equation}
The Lie bracket should be defined to be:

\begin{equation}
[\varGamma_{\sigma,\{I_{i}\}},
\varGamma_{\delta,\{J_{j}\}}]=
\varGamma_{\sigma,\{I_{i}\}}\circ
\varGamma_{\delta,\{J_{j}\}}-
\varGamma_{\delta,\{J_{j}\}}\circ
\varGamma_{\sigma,\{I_{i}\}}.
\end{equation}

\subsection{Feynman diagrams}

\begin{definition}
\begin{itemize}
\item A Feymman diagram is a pair $(\sigma,\{I_{i}\})$,
where $\{I_{i}\}\in\mathcal{P}_{dis}^{2}(A)$,
$\sigma$ is a map from $I$ to itself,
$\sigma^{2}=\sigma$, where $I=\mathcal{R}(\{I_{i}\})$.
A Feymman diagram $(\sigma,\{I_{i}\})$ is also 
denoted by $\Gamma_{\sigma,\{I_{i}\}}$,
or, $\Gamma_{\{I_{i}\}}$ for short.
$I=\mathcal{R}(\{I_{i}\})$ is called the 
total set of edges.

\item We call $I_{ext}=\{e\in I|\sigma(e)=e\}$ 
the set of external lines and 
$I_{int}=(I\setminus I_{ext})\diagup\sim$
the set of internal lines.
We say a Feynman diagram
$(\sigma,\{I_{i}\})$ is connected if
$(\sigma|_{I\setminus I_{ext}},\{I_{i}\}_{(I\setminus I_{ext})})$
is connected.
\end{itemize}	
\end{definition}

\begin{remark}
\begin{itemize}
\item 
It is obvious that $I\setminus I_{ext}$ is 
$\sigma-$invariant, and $\sigma|_{I\setminus I_{ext}}$
has no fixed points, thus $(\sigma|_{I\setminus I_{ext}},
\{I_{i}\}_{(I\setminus I_{ext})})$ is an ordinary
graph denoted by $\varGamma_{\{I_{i}\},int}$.
$\varGamma_{\{I_{i}\},int}$ is a Feynman diagram
without external lines. A general Feymman diagram can be
regarded as an extension of an ordinary graph.
In fact, let $(\sigma,\{I_{i}\})$ be an ordinary graph,
i.e. $I_{ext}=\emptyset$, now we want to add
the set of external lines $\tilde{I}_{ext}$ to $I$
by a map $f:\tilde{I}_{ext}\to \{I_{i}\}$.
The new Feymman diagram is 
naturally chosen to be the pair
$(\omega,\tilde{I}_{ext}\sqcup_{f}\{I_{i}\})$,
the new structure map $\omega$ is defined to be
$\omega|_{I}=\sigma$, $\omega|_{\tilde{I}_{ext}}=id$.
A Feymman diagram 
$(\sigma,\{I_{i}\})$ with $I_{ext}\not=\emptyset$ can be rewritten as
$(\sigma,I_{ext}\sqcup_{f}\{I_{i}\}_{(I\setminus I_{ext})})$,
which is an extension of 
$(\sigma|_{I\setminus I_{ext}},\{I_{i}\}_{(I\setminus I_{ext})})$,
where $f^{-1}(I_{i}\setminus I_{ext})=I_{i}\cap I_{ext}$.

\item We call $\{I_{i}\}$ the set of vertices of
$\Gamma_{\{I_{i}\}}$ denoted by 
$\mathbf{vert}(\Gamma_{\{I_{i}\}})$.
We identify the vertices of $\Gamma_{\{I_{i}\}}$
with ones of $\varGamma_{\{I_{i}\},int}$, i.e.
we have $\mathbf{vert}(\Gamma_{\{I_{i}\}})=
\mathbf{Vert}(\varGamma_{\{I_{i}\},int})$.
Actually, for a graph $\Gamma_{\{I_{i}\}}$,
there are two structure maps, $\sigma$ and projection
$p:\mathcal{R}(\{I_{i}\})\rightarrow \{I_{i}\}$,
$p(e)=I_{i}\Longleftrightarrow e\in I_{i}$.

\end{itemize}
\end{remark}

\begin{definition} 
Let $(\sigma,\{I_{i}\})$ be a connected Feymman diagram,
$J\subset I$ be a subset, where $I=\mathcal{R}(\{I_{i}\})$.
We call the pair 
$(\sigma_{J},\{I_{i}\}_{I_{i}\cap J\not=\emptyset})$ 
is a sub-diagram of $(\sigma,\{I_{i}\})$,
if $J$ satisfies the following conditions:

\begin{itemize}
	
\item 

$$
\sigma(J)\cap\mathcal{R}_{J}
=J\cap\sigma(\mathcal{R}_{J}),
$$
	
\item
If $I_{i}\cap J\not=\emptyset$, then
$I_{i}\cap[(J\cap\sigma(\mathcal{R}_{J}))
\setminus I_{ext}]\not=\emptyset$,
\end{itemize}
where $\mathcal{R}_{J}=
\mathcal{R}(\{I_{i}\}_{I_{i}\cap J\not=\emptyset})$
is the tatol set of 
$(\sigma_{J},\{I_{i}\}_{I_{i}\cap J\not=\emptyset})$.
The structure map
$\sigma_{J}:\mathcal{R}_{J}\to \mathcal{R}_{J}$ 
is defined to be:
$$
\sigma_{J}(e)=\left\{
\begin{array}{cc}
\sigma(e), &
e\in J\cap\sigma(\mathcal{R}_{J}), \\
e, & e\in \mathcal{R}_{J}\setminus
(J\cap\sigma(\mathcal{R}_{J})).
\end{array}
\right.
$$
The sub-diagram is also denoted by 
$\Gamma_{J}\subset \Gamma_{\{I_{i}\}}$.
When $J\cap\sigma(\mathcal{R}_{J})=\mathcal{R}_{J}
\cap\sigma(\mathcal{R}_{J})$ we call $\Gamma_{J}$
a subgraph.
\end{definition}

\begin{remark}
\begin{itemize}
\item The first condition in definition 4.5 
means that $J\cap\sigma(\mathcal{R}_{J})$ is 
$\sigma-$invariant.
\item The second condition means that
each vertex of $\Gamma_{J}$ is the
endpoint of at least one internal
line in $\Gamma_{J}$.
\item For subgraph $\Gamma_{J}$, $\mathcal{R}_{J}
\setminus(J\cap\sigma(\mathcal{R}_{J}))$ plays
the role of external lines of $\Gamma_{J}$.
Here, the subset $[(\mathcal{R}_{J}
\cap\sigma(\mathcal{R}_{J}))\setminus
(J\cap\sigma(\mathcal{R}_{J}))]\setminus I_{ext}$ 
is regarded as a subset of external lines 
of $\Gamma_{J}$. Actually, the external
momenta corresponding to this subset
will be canceled each other. 

\item The set of external lines in 
$\Gamma_{J}$ should be 

$$
(\mathcal{R}_{J}\setminus
(J\cap\sigma(\mathcal{R}_{J})))\cup
(\mathcal{R}_{J}\cap I_{ext}).
$$
The set of internal lines in $\Gamma_{J}$ is

$$
((J\cap\sigma(\mathcal{R}_{J}))\setminus I_{ext})
\diagup\sim.
$$
\end{itemize}
\end{remark}

Let $(\sigma,\{I_{i}\})$ be a Feymman diagram,
we call a $\sigma-$invariant subset
$J$ is internal, if $J\cap I_{ext}=\emptyset$.
We can prove that each sub-diagram
of $(\sigma,\{I_{i}\})$ can be uniquely 
determined by an internal $\sigma-$invariant subset.

\begin{proposition}
	There is an one-one corresponding
	between the sub-diagrams and internal
	$\sigma-$invariant subsets. 
\end{proposition}

\begin{proof}
Let $(\sigma,\{I_{i}\})$ be a Feymman diagram,
$J\subset I=\mathcal{R}(\{I_{i}\})$ satisfy
the conditions in definition 4.5. Then,
$J^{\prime}=(J\cap\sigma(\mathcal{R}_{J}))
\setminus I_{ext}$ is an internal 
$\sigma-$invariant subset. Due to the 
first condition in definition 4.5,
we know that $\mathcal{R}_{J}=
\mathcal{R}_{J^{\prime}}$. Thus
$\varGamma_{J}=\varGamma_{J^{\prime}}$.
Conversely, let $J\subset I$ be an
$\sigma-$invariant subset, then,
$J$ satisfies all conditions in
definition 4.5. Thus $J$ determines
a sub-diagram of $(\sigma,\{I_{i}\})$.

\end{proof}

\begin{remark}
\begin{itemize}
\item Let $(\sigma,\{I_{i}\})$ be a Feymman diagram,
and $J\subset I=\mathcal{R}(\{I_{i}\})$ 
be an internal $\sigma-$invariant subset.
Then, the set of external lines in $\varGamma_{J}$
just be $\mathcal{R}_{J}\setminus J$, and
the set of internal lines in $\varGamma_{J}$
is $J\diagup\sim$.
\item By the definition 4.1 and 4.5, we know that there is
much difference between the sub-diagrams
of Feynman diagrams and subgraphs of the 
ordinary graphs. Noting $(\sigma|_{I\setminus I_{ext}}, \\
\{I_{i}\}_{(I\setminus I_{ext})})$ can be 
regarded as a ordinary graph, the conclusion of
proposition 4.6 means that there is an one-one
correspondence between the sub-diagrams of
Feymman diagram $(\sigma,\{I_{i}\})$ and
the subgraphs of $(\sigma|_{I\setminus I_{ext}},
\{I_{i}\}_{(I\setminus I_{ext})})$ in the sense
of the ordinary graphs. Thus, when we discuss
the sub-diagrams of Feynman diagrams, the
discussion can be reduced to the situations
of the ordinary graphs.
\end{itemize}	 
\end{remark}

From proposition 4.2 and proposition 4.6
we have,

\begin{proposition}
Let $\varGamma_{\{I_{i}\}}$ be a Feymman diagram,
$\varGamma_{J}$ be a sub-diagram, where $J$
is an internal $\sigma-$invariant subset.  
If $\varGamma_{J}$ is a disconnected sub-diagram, 
then $\varGamma_{J}$ admits to the decomposition
as follows:

$$
\varGamma_{J}=\bigcup_{j}\varGamma_{J_{j}},
$$
where $\{J_{j}\}$ satisfies the following
conditions:

\begin{itemize}
\item $\{J_{j}\}\in\mathbf{Part}(J)$,
$|\{J_{j}\}|\geq 2$,
\item each $J_{j}$ is an internal
$\sigma-$invariant subset,
\item $\{J_{j}\}\sqsubset\{I_{i}\}$,
\item each $\varGamma_{J_{j}}$ is a connected
component of $\varGamma_{J}$. 
\end{itemize}

The decomposition of $\varGamma_{J}$ is unique.
\end{proposition}

We now consider the quotient of Feynman diagrams.

\begin{definition}
Let $\varGamma_{\{I_{i}\}}$ be a connected 
Feymman diagram, $\varGamma_{J}$ be a connected proper
sub-diagram of $\Gamma_{\{I_{i}\}}$ determined by
a internal $\sigma-$invariant subset $J\subset I$.
We define the quotient of $\varGamma_{\{I_{i}\}}$
by $\varGamma_{J}$ to be a pair
$(\sigma|_{I\setminus J},\{I_{i}\}\diagup\{J_{j}\})$,
denoted by $\varGamma_{\{I_{i}\}}\diagup\varGamma_{J}$
also, where $\{J_{j}\}=\{I_{i}\}_{(J)}$. 	
\end{definition}
	
\begin{remark}
$\\$
\begin{itemize}
		
\item Comparing with definition 4.2, in the
situation of $I_{ext}=\emptyset$, the quotient of Feynman
diagrams is same as the quotient of the ordinary
graphs exactly. Recalling the contents about $f-$union 
in section 2 and discussions in remark 4.1, the quotient 
of general Feymman diagrams can be reduced to the situation of the
ordinary graphs. Actually, a Feymman diagram
$(\sigma,\{I_{i}\})$ can be expressed as
$(\sigma, I_{ext}\sqcup_{f}\{I_{i}\}_{(I\setminus I_{ext})})$,
where $f:I_{ext}\to\{I_{i}\}_{(I\setminus I_{ext})}$ satisfies
$f^{-1}(I_{i}\cap(I\setminus I_{ext}))=I_{i}\cap I_{ext}$.
Therefore, for an internal $\sigma-$invariant subset $J$, 
from the viewpoint of partition, the quotient
$\Gamma_{\{I_{i}\}_{(I\setminus I_{ext})}\diagup\{I_{i}\}_{(J)}}$
results in a projection $p_{J}:\{I_{i}\}_{(I\setminus I_{ext})}
\to\{I_{i}\cap(I\setminus I_{ext})\}_{I_{i}\cap J=\emptyset}\cup
\{(\mathcal{R}_{J}\cap(I\setminus I_{ext}))\setminus J\}$,
such that 

$$
p_{J}(I_{i}\cap(I\setminus I_{ext}))=
\left\{
\begin{array}{cc}
(\mathcal{R}_{J}\cap(I\setminus I_{ext}))\setminus J, &
I_{i}\cap J\not=\emptyset, \\
I_{i}\cap(I\setminus I_{ext}), &
I_{i}\cap J=\emptyset.
\end{array}
\right.
$$
Then 

$$
\varGamma_{I_{ext}\sqcup_{f}\{I_{i}\}_{(I\setminus I_{ext})}}
\diagup\varGamma_{J_{ind}}=
\varGamma_{I_{ext}\sqcup_{p_{J}\circ f}
(\{I_{i}\}_{(I\setminus I_{ext})}\diagup\{I_{i}\}_{(J)})}.
$$
		
\item 
If a sub-diagram $\Gamma_{J}$
determined by an internal $\sigma-$invariant 
subset $J$ is not connected, then, according to
proposition 4.1 or proposition 4.7,
$J$ admits to the decomposition $J=\bigcup_{j=1}^{p}J_{j}$,
such that each $\varGamma_{J_{j}}$ is a 
connected component of $\Gamma_{J}$.
The quotient $\Gamma_{\{I_{i}\}}$ by $\Gamma_{J}$
should be

$$
\begin{array}{c}
\varGamma_{\{I_{i}\}}\diagup\varGamma_{J}=
(\sigma|_{I\setminus J},\{I_{i}\}\diagup(J_{j})) \\
=(\cdots((\varGamma_{\{I_{i}\}}\diagup\varGamma_{J_{1}}\diagup
\varGamma_{J_{2}})\cdots)\diagup \varGamma_{J_{p}}.
\end{array}
$$
Above quotient is also denoted by		
	
$$
\varGamma_{\{I_{i}\}}\diagup(\varGamma_{J_{j}})=
\Gamma_{\{I_{i}\}\diagup(J_{j})}.
$$
\end{itemize}
\end{remark}

The previous discussions show that the
quotient of Feynman diagrams can be reduced
to the situation of the ordinary graphs.
Then we have,

\begin{proposition}
Let $\varGamma_{\sigma,\{I_{i}\}}$ be a connected
Feynman diagram, $J\subset I=\mathcal{R}(\{I_{i}\})$ 
be an internal $\sigma-$invariant subset. Then,
$\varGamma_{\{I_{i}\}}\diagup\varGamma_{J}$ is
a connected Feynman diagram.
\end{proposition}

Furthermore, a conclusion similar to proposition 4.4
or theorem 2.1 is valid.

\begin{definition}
Let $(\sigma,\{I_{i}\}_{i=1}^{m})$, 
$(\lambda,\{J_{j}\}_{j=1}^{n})$
be two connected Feynman diagrams
satisfying $I\cap J=\emptyset$, where
$I=\mathcal{R}(\{I_{i}\}$,
$J=\mathcal{R}(\{J_{j}\}$. Then we define
the insertion of
$\varGamma_{\{J_{j}\}}$ into $\varGamma_{\{I_{i}\}}$ 
at $I_{a}$ by $\iota$ to be a pair 
$(\delta,\{I_{i}\}\circ_{I_{a}}^{\iota}
\{J_{j}\}_{(J\setminus J_{ext})})$ 
denoted by $\varGamma_{\{I_{i}\}}
\circ_{I_{a}}^{\iota}\varGamma_{\{J_{j}\}}$, or,
$\varGamma_{\{I_{i}\}\circ_{I_{a}}^{\iota}
\{J_{j}\}_{(J\setminus J_{ext})}}$ also, where
$1\leq a\leq m$, $\iota:I_{a}\to \{J_{j}\}$, 
$\delta|_{I}=\sigma$, $\delta|_{J\setminus J_{ext}}=\lambda$.
\end{definition}

\begin{remark}
\begin{itemize}
\item In the case of Feymman diagram, by definition 4.4
it is easy to check that 
$(\{I_{i}\}\circ_{I_{a}}^{\iota}\{J_{j}\}_{(J\setminus J_{ext})})_{(J\setminus J_{ext})}
=\{J_{j}\}_{(J\setminus J_{ext})}$, and 
$\delta|_{J\setminus J_{ext}}=\lambda|_{J\setminus J_{ext}}$.
$J\setminus J_{ext}$ is an internal 
$\delta-$invariant subset obviously. Then, by proposition 2.3
we have

$$
\varGamma_{(\{I_{i}\}\circ_{I_{a}}^{\iota}
\{J_{j}\}_{(J\setminus J_{ext})})\diagup
\{J_{j}\}_{(J\setminus J_{ext})}}=\Gamma_{\{I_{i}\}}.
$$
If $\Gamma_{\{I_{i}\}}$ and
$\Gamma_{\{J_{j}\}}$ are connected,
so is $\Gamma_{\{I_{i}\}\circ_{I_{a}}^{\iota}
\{J_{j}\}_{(J\setminus J_{ext})}}$.

\item Recalling Feymman diagram $(\sigma,\{I_{i}\})$
can be rewritten as 
$(\sigma,I_{ext}\sqcup_{f}\{I_{i}\}_{(I\setminus I_{ext})})$,
where $f$ satisfies $f^{-1}(I_{i}\setminus I_{ext})=I_{i}\cap I_{ext}$,
as an application of proposition 2.4, we know
that there is a map
$F:I_{ext}\to
\{I_{i}\}_{(I\setminus I_{ext})}\circ_{I_{a}}^{\iota^{\prime}}
\{J_{j}\}_{(J\setminus J_{ext})}$
($\iota^{\prime}=\iota|_{I_{a}\setminus I_{ext}}$)
such that

$$
(I_{ext}\sqcup_{f}\{I_{i}\}_{(I\setminus I_{ext})})
\circ_{I_{a}}^{\iota}\{J_{j}\}_{(J\setminus J_{ext})}=
I_{ext}\sqcup_{F}(\{I_{i}\}_{(I\setminus I_{ext})}
\circ_{I_{a}\setminus I_{ext}}^{\iota^{\prime}}
\{J_{j}\}_{(J\setminus J_{ext})}).
$$
\item In the same way as subsection 4.1,
we can define the composition of Feynman diagrams
as follows:

$$
\varGamma_{\{I_{i}\}}\circ\varGamma_{\{J_{j}\}}
=\sum\limits_{a,\iota}\varGamma_{\{I_{i}\}}
\circ_{I_{a}}^{\iota}\varGamma_{\{J_{j}\}}.
$$
The Lie bracket of Feynman diagrams
can be defined to be

$$
[\varGamma_{\{I_{i}\}},\varGamma_{\{J_{j}\}}]=
\varGamma_{\{I_{i}\}}\circ\varGamma_{\{J_{j}\}}-
\varGamma_{\{J_{j}\}}\circ\varGamma_{\{I_{i}\}}.
$$
Lie bracket above mentioned will be well defined
obviouly.	
\end{itemize}
\end{remark}

\subsection{Kontsevich graphs}

\begin{definition}
An admissible graph is a triple $(\sigma,\{I_{i}\},\{J_{j}\})$,
where $\{I_{i}\},\{J_{j}\}\in\mathcal{P}_{dis}^{2}(A)$,
$I\cap J=\emptyset$,
$\sigma$ is a map from $I\cup J$ to itself without 
fixed points, $\sigma^{2}=\sigma$,
where $I=\mathcal{R}(\{I_{i}\})$,
$J=\mathcal{R}(\{J_{j}\})$.
$\{I_{i}\}$ and $\{J_{j}\}$ satisfy

\begin{equation}
\begin{array}{c}
\sigma(I_{i})\cap I_{i}=\emptyset,\,\,
\sigma(J_{j})\cap J_{j^{\prime}}=\emptyset,\,\,
|\sigma(I_{i})\cap I_{i^{\prime}}|\leq 1,\,\,
|\sigma(I_{i})\cap J_{j}|\leq 1, \\
\forall\,i,\,i^{\prime},\,j,\,j^{\prime},\,i\not=i^{\prime}.
\end{array}
\end{equation}
We denote the
admissible graph $(\sigma,\{I_{i}\},\{J_{j}\})$
by $\varGamma_{\sigma,\{I_{i}\},\{J_{j}\}}$, or,
$\varGamma_{\{I_{i}\},\{J_{j}\}}$ for short.
$I\cup J$ is called
the total set of $\varGamma_{\{I_{i}\},\{J_{j}\}}$.
$\mathbf{Vert}(\varGamma_{\sigma,\{I_{i}\},\{J_{j}\}})
=\{I_{i}\}\cup\{J_{j}\}$. 
The vertices from $\{I_{i}\}$
are called the vetices of the first type,
the verices from $\{J_{j}\}$ are called 
the verices of the second type.
\end{definition}

\begin{remark} 
\begin{itemize}
\item The conditions in (4.5) means that:
\begin{itemize}
\item There no loop in $\varGamma_{\{I_{i}\},\{J_{j}\}}$.
\item For any two vertices there is at most one 
edge to connect them.
\item There is not edge to connect any two vertices of 
the second type.
\end{itemize}
\item In definition 4.8, we 
ignore the orientation of the edges. Now we discuss 
the issues of the orientation in details.
Recalling the previous discussions,
the edges of a graph are regarded as the
equivalent classes under the equivalent relation 
defined as above, each equivalent
class consists of two elements $(e,\sigma(e))$
and $(\sigma(e),e)$ ($e\in I\cup J$). 
To indicate the orientation of an edge,
we can only choose one element in each equivalent class,
for example, $(e,\sigma(e))$, to represent
an edge. If the first element $e\in I_{i}$, 
the second element $\sigma(e)\in I_{i^{\prime}}$
(or $\sigma(e)\in J_{j}$), then $I_{i}$ will be 
the starting point of the edge $(e,\sigma(e))$, 
$I_{i^{\prime}}$ (or $J_{j}$) will be the endpoint
of this edge. Furthermore, each $J_{j}$ can not be the
starting point of any edge. 

\item The connectivity of an admissible graph
can be defined in the same way as definition 4.1.
\end{itemize} 
\end{remark}

We now pay attention to subgraphs.

\begin{lemma}
Let $\varGamma_{\sigma,\{I_{i}\},\{J_{j}\}}$
be an admissible graph. Then
$\sigma(J)\cup J$ and $I\setminus\sigma(J)$
are two $\sigma-$invariant subsets. Additionally,
for a $\sigma-$invariant subset $K\subset I\cup J$,
we have

$$
K=(\sigma(K^{\prime})\cup K^{\prime})
\cup K^{\prime\prime},
$$
where $K^{\prime}=K\cap J$,
$K^{\prime\prime}=K\cap(I\setminus\sigma(J))$.
\end{lemma}

\begin{definition}
Let $(\sigma,\{I_{i}\},\{J_{j}\})$ be an admissible
graph, and $K\subset I\cup J$
be a $\sigma-$invariant subset such that
$K\cap J\not=\emptyset$, where
$J=\mathcal{R}(\{J_{j}\})$. 
A subgraph of $\varGamma_{\sigma,\{I_{i}\},\{J_{j}\}}$
determined by $K$ is a triple 
$(\sigma|_{K},\{I_{i}\}_{(K)},\{J_{j}\}_{(K)})$,
also denoted by $\varGamma_{\{I_{i}\}_{(K)},\{J_{j}\}_{(K)}}$,
or $\varGamma_{K}$ for short.
\end{definition}

Now we discuss the quotient and insertion of
the asmissible graphs. 

\begin{definition}
Let $\varGamma_{\sigma,\{I_{i}\},\{J_{j}\}}$
be a connected admissible graph, 
$K\subset I\cup J$ be a $\sigma-$invariant
subset satisfying:

\begin{itemize}
\item $|(\mathcal{R}_{\{J_{j}\},K\cap J}\setminus K)
\cap(\mathcal{R}_{\{I_{i}\},K\cap I}\setminus K)|\leq 1$,
\item $|(\mathcal{R}_{\{J_{j}\},K\cap J}\setminus K)
\cap I_{i}|\leq 1$, $I_{i}\cap K=\emptyset$,
\item $|(\mathcal{R}_{\{I_{i}\},K\cap I}\setminus K)
\cap J_{j}|\leq 1$, $J_{j}\cap K=\emptyset$,
\item $|(\mathcal{R}_{\{I_{i}\},K\cap I}\setminus K)
\cap I_{i}|\leq 1$, $I_{i}\cap K=\emptyset$.
\end{itemize} 
Then, we define the quotient of
$\varGamma_{\sigma,\{I_{i}\},\{J_{j}\}}$ by
$\varGamma_{K}$ to be a triple

$$
(\sigma|_{(I\cup J)\setminus K},
\{I_{i}\}\diagup\{I_{i}\}_{(K)},
\{J_{j}\}\diagup\{J_{j}\}_{(K)}),
$$
denoted by
$\varGamma_{\sigma,\{I_{i}\},\{J_{j}\}}\diagup
\varGamma_{K}$ also.
\end{definition}

\begin{definition}
Let $\varGamma_{\{I_{i}\},\{J_{j}\}}$ and
$\varGamma_{\{L_{l}\},\{K_{k}\}}$ be two connected
admissible graphs with structure maps
$\sigma$ and $\lambda$ 
($(I\cup J)\cap(L\cup K)=\emptyset$).
Let $1\leq a\leq |\{I_{i}\}|$,
$1\leq b\leq |\{J_{j}\}|$, $\iota:I_{a}\to\{L_{l}\}$,
$\kappa:J_{b}\to\{K_{k}\}$,
we define the insertion of
$\varGamma_{\{L_{l}\},\{K_{k}\}}$ into
$\varGamma_{\{I_{i}\},\{J_{j}\}}$ 
to be the triple

$$
(\delta,\{I_{i}\}\circ_{I_{a}}^{\iota}\{L_{l}\},
\{J_{j}\}\circ_{J_{b}}^{\kappa}\{K_{k}\}),
$$
where $\delta$ satisfies
$\delta|_{I\cup J}=\sigma$, $\delta|_{L\cup K}=\lambda$,
$I=\mathcal{R}(\{I_{i}\})$, $J=\mathcal{R}(\{J_{j}\})$,
$L=\mathcal{R}(\{L_{l}\})$,
$K=\mathcal{R}(\{K_{k}\})$.
The graph $(\delta,\{I_{i}\}\circ_{I_{a}}^{\iota}\{L_{l}\},
\{J_{j}\}\circ_{J_{b}}^{\kappa}\{K_{k}\})$ is also
denoted by $\varGamma_{\sigma,\{I_{i}\},\{J_{j}\}}
\circ_{I_{a},J_{b}}^{\iota,\kappa}
\varGamma_{\lambda,\{L_{l}\},\{K_{k}\}}$.
\end{definition}

Similar to the situations in
subsection 4.1 and 4.2, it is obvious
that the following conclusions are valid.

\begin{proposition}
\begin{itemize}
	\item Let admissible graph
	$\varGamma_{\sigma,\{I_{i}\},\{J_{j}\}}$
	and its subgraph $\varGamma_{K}$ are
	connected, where $K\subset(I\cup J)$
	is a $\sigma-$invariant subset. Then
	the quotient $\varGamma_{\sigma,\{I_{i}\},
	\{J_{j}\}}\diagup\varGamma_{K}$ is connected also.
	\item Let $\varGamma_{\{I_{i}\},\{J_{j}\}}$ and
	$\varGamma_{\{L_{l}\},\{K_{k}\}}$ be two connected
	admissible graphs. Then $(\delta,\{I_{i}\}
	\circ_{I_{a}}^{\iota}\{L_{l}\},
	\{J_{j}\}\circ_{J_{b}}^{\kappa}\{K_{k}\})$
	is a connected admissible graph. 
\end{itemize}
\end{proposition}

Let $(\sigma,\{I_{i}\},\{J_{j}\})$ and
$(\lambda,\{L_{l}\},\{K_{k}\})$ be two 
connected admissible graphs. Then,
it is natural for us to define the 
composition of the admissible graphs
to be

\begin{equation}
\begin{array}{c}
(\sigma,\{I_{i}\},\{J_{j}\})\circ
(\lambda,\{L_{l}\},\{K_{k}\}) \\
=\sum\limits_{a,b,\iota,\kappa}
(\delta,\{I_{i}\}\circ_{I_{a}}^{\iota}\{L_{l}\},
\{J_{j}\}\circ_{J_{b}}^{\kappa}\{K_{k}\}).
\end{array}
\end{equation}
Now we define the Lie bracket to be

\begin{equation}
\begin{array}{c}
[(\sigma,\{I_{i}\},\{J_{j}\}),
(\lambda,\{L_{l}\},\{K_{k}\})] \\
=(\sigma,\{I_{i}\},\{J_{j}\})\circ
(\lambda,\{L_{l}\},\{K_{k}\})-
(\lambda,\{L_{l}\},\{K_{k}\})\circ
(\sigma,\{I_{i}\},\{J_{j}\}).
\end{array}
\end{equation}
Observing the definition 4.11
and the formula (4.6),
the structure map $\delta$ on $(\delta,\{I_{i}\}
\circ_{I_{a}}^{\iota}\{L_{l}\},
\{J_{j}\}\circ_{J_{b}}^{\kappa}\{K_{k}\})$
is independent of the choices of $a,b,\iota,\kappa$,
even the order of the insertion,
thus, all admissible graphs concerning
(4.7) adapt same structure map.
Above fact implies that the Lie bracket (4.7) is
similar to the cartesian product of Lie algebras,
which means that the Lie bracket (4.7) will
satisfy the Jacobi identity.

\end{document}